\DeclareMathAlphabet{\mathcal}{OMS}{cmsy}{m}{n}
\newtheorem{thm}{Theorem}[section]
\newtheorem{cor}[thm]{Corollary}
\newtheorem{lem}[thm]{Lemma}
\newtheorem{prop}[thm]{Proposition}
\theoremstyle{definition}
\newtheorem{defn}[thm]{Definition}
\newtheorem{openprob}[thm]{Open Problem}
\newtheorem{rem}[thm]{Remark}
\newtheorem{ex}[thm]{Example}
\numberwithin{equation}{section}
\titleformat{\section}{\normalfont\bfseries\centering}{\thesection.}{.25em}{}
\titleformat{\subsection}{\normalfont\bfseries}{\thesubsection.}{.25em}{}
\titleformat{\subsubsection}{\normalfont\it}{\thesubsubsection.}{.25em}{}
\titlespacing{\section}{0pt}{*4}{*1.5}
\titlespacing{\subsection}{0pt}{*4}{*0.5}
\newcommand{\paragraf}{\textsection}
\renewcommand{\emptyset}{\varnothing}
\newcommand{\braces}[1]{{\rm (}#1{\rm )}}
\newcommand{\rmref}[1]{{\rm\ref{#1}}}
\newcommand{\ol}{\overline}
\newcommand{\wt}{\widetilde}
\newcommand{\R}{\ensuremath{\mathbb R}}    
\newcommand{\C}{\ensuremath{\mathbb C}}    
\newcommand{\N}{\ensuremath{\mathbb N}}    
\newcommand{\Z}{\ensuremath{\mathbb Z}}    
\newcommand{\calL}{\mathcal L}
\newcommand{\calX}{\mathcal X}
\newcommand{\calZ}{\mathcal Z}
\newcommand{\la}{\lambda}
\newcommand{\bmat}[4]
{
   \begin{bmatrix}
      #1 & #2\\
      #3 & #4
   \end{bmatrix}
}
\renewcommand{\Re}{\operatorname{Re}}
\newcommand{\dom}{\operatorname{dom}}
\newcommand{\ran}{\operatorname{ran}}
\newcommand{\Sra}{\Rightarrow}
\newcommand{\Llra}{\Longleftrightarrow}
\newcommand{\Slra}{\Leftrightarrow}
\DeclareMathOperator*{\wslim}{w^\ast-lim}
\newcommand{\ind}{\operatorname{ind}}
\begin{document}

\title[On DAEs with Bounded Spectrum in Banach Spaces]{On Differential-Algebraic Equations with\\Bounded Spectrum in Banach Spaces}

\author[F.~M.~Philipp]{Friedrich M.\ Philipp}
\address{{\bf F.~Philipp}
(ORCID: 0000-0002-4670-8894)
Technische Universit\"at Ilmenau, Institute of Mathematics,
Weimarer Stra\ss e 25, D-98693 Ilmenau, Germany}
\email{friedrich.philipp@tu-ilmenau.de}

\subjclass{34A09, 34A12, 34A30, 34G10}
\keywords{Differential-algebraic equations, infinite-dimensional, Banach space, pencil, spectrum, quasi-nilpotent operator}

\date{April 14, 2025}

\begin{abstract}
The Weierstra\ss\ form for regular DAEs in finite dimensions decouples a linear DAE into an ODE and the nilpotent part of the underlying pencil. Here, we provide necessary and sufficient conditions for the possibility of such a decomposition in the case of DAEs in Banach spaces. Moreover, we consider the larger class of linear operator pencils with bounded spectra and show that the associated homogeneous DAE can be reduced to an ODE and a seemingly simple DAE of the form $\frac d{dt}Tx = x$ with a quasi-nilpotent operator $T$. As examples show, there are cases with only the trivial solution and others with non-trivial solutions. We characterize the existence of $L^\infty$-solutions on the half-axis, $L^2$-solutions on compact time intervals, and analytic solutions.
\end{abstract}

\maketitle

\section{Introduction}
In this note we are interested in solving homogeneous differential-algebraic equations (DAEs) with an initial value condition of the form
\begin{equation}\label{e:IVP0}
\tfrac{d}{dt}Ex = Ax,\qquad Ex(0) = Ex_0,
\end{equation}
where $E$ and $A$ are linear operators on a Banach space $\calX$. We shall assume throughout that $E$ is {\em not} boundedly invertible and that the corresponding pencil $sE-A$ is regular. 

Problem \eqref{e:IVP0} can obviously be regarded as a generalized Cauchy problem. On the other hand, every Cauchy problem of the form $\dot z = Tz$, $z(0) = z_0\in\dom T$, with an operator $T$ with nonempty resolvent set $\rho(T)$ can be recast in the form \eqref{e:IVP0} by setting $E = (T-\mu)^{-1}$, $A = T(T-\mu)^{-1}$, and $x = (T-\mu)z$ for some $\mu\in\rho(T)$.

In finite dimensions, the solution theory for this problem is well understood, see, e.g., \cite{km,bit} or \cite[Appendix A]{fmpsw}. It can be shown that there always exist invertible matrices $U$ and $V$ such that $UEV$ and $UAV$ are of the form
$$
UEV = \bmat I00N\qquad\text{and}\qquad UAV = \bmat M00I,
$$
where $N$ is nilpotent and $I$ denotes the identity matrix. Obviously, this decouples the DAE in \eqref{e:IVP0} into an ordinary differential equation (ODE) and the particularly simple DAE of the form $\frac d{dt}Nz = z$, which---due to the nilpotency of $N$---has the only solution $z=0$.

The matrices $U$ and $V$ can be obtained with the help of the so-called {\em Wong sequences} (see, e.g., \cite{bit,bt,w}). These are two sequences of nested subspaces $N_k$ and $R_k$, where the $N_k$ are ascending and the $R_k$ are descending. The minimal $m\in\N$ such that $N_{m+1} = N_m$ is called the {\em index} of the problem and can be shown to coincide with the maximal length of Jordan  chains at the eigenvalue $\infty$ of the pencil $sE-A$. We then also have $R_{m+1} = R_m$ and $\calX = N_m\oplus R_m$, where $\oplus$ denotes the direct sum of subspaces. If $P_0$ denotes the projection onto $N_m$ with respect to the latter decomposition and $S = E(I-P_0) + AP_0$, then $S$ is invertible and
$$
E = S(I_{R_m}\oplus N')\qquad\text{and}\qquad A = S(M'\oplus I_{N_m})
$$
with linear maps $N' : N_m\to N_m$ and $M' : R_m\to R_m$, where $N'$ is nilpotent. The matrices $U$ and $V$ are now easy to obtain.

However, in infinite dimensions---even in the case of bounded coefficient operators $E$ and $A$---it is not clear at all whether the ascending and descending Wong sequences $N_k$ and $R_k$ become stationary at some point or whether the Wong subspaces $R_k$ are closed. One of the main results of this chapter is that the two Wong sequences become stationary if and only if the spectrum of the pencil $(E,A)$ is bounded and the resolvent $(sE-A)^{-1}$ has polynomial growth as $s\to\infty$. We say that the pencil has finite index in this case. Several equivalent conditions are provided in Theorem \ref{t:fin_ind}. The solution of the initial value problem \eqref{e:IVP0} can then be found analogously to the finite-dimensional situation, cf.\ Theorem \ref{t:ivp_solved}.

The situation is drastically different when the spectrum of $(E,A)$ is bounded but the resolvent growth at $\infty$ is {\em not} of polynomial type. In this case, the problem reduces to an ODE and a DAE of the form $\frac d{dt}Tx = x$, where the spectrum of $T$ merely consists of zero. Such operators are called quasi-nilpotent. We give two examples of quasi-nilpotent, but non-nilpotent operators $T$ in one of which the only solution to the corresponding DAE is the trivial one, but in the other non-trivial solutions also exist. We leave open the question for which quasi-nilpotent operators $T$ the solution of the DAE $\frac d{dt}Tx = x$ is unique (and thus trivial). However, we are able to characterize the existence of $L^\infty$-solutions on the half axis (Theorem \ref{t:full}), $L^2$-solutions on compact time intervals (Theorem \ref{t:l2}), and analytic solutions (Proposition \ref{p:analytic}).

The paper is organized as follows. In Section \ref{s:setting} we describe the setting of the paper and perform the first reductions: from unbounded to bounded coefficients $E$ and $A$ and from pencils $(E,A)$ to operators. We close with a generalization of the Riesz-Dunford spectral projection and show that
$$
P_\sigma := \frac 1{2\pi i}\int_C(zE-A)^{-1}E\,dz
$$
can also be regarded as a spectral projection for the pencil $(E,A)$. Here, $C$ denotes a closed curve enclosing a spectral set $\sigma$ of $(E,A)$. In Section \ref{s:ind_finite} we consider pencils of finite index and prove the main result, Theorem \ref{t:fin_ind}. After that, we provide some conditions which guarantee the closedness of the Wong subspaces $R_k$. The section ends with the above-mentioned result Theorem \ref{t:ivp_solved} on the solution of the initial value problem \eqref{e:IVP0} in the finite-index case.

Section \ref{s:qnp} is devoted to the study of the DAE $\frac d{dt}Tx = x$, where $T$ is quasi-nilpotent, but not nilpotent.  We provide the above-mentioned two examples, which show that the solution behavior of the DAE is not determined alone by the fact that $T$ is quasi-nilpotent. Here, we provide the above-mentioned necessary and sufficient conditions for the existence of $L^\infty$-solutions on $[0,\infty)$, $L^2$-solutions on a compact time interval, and analytic solutions.

\section{Setting}\label{s:setting}
In this paper, we consider initial value problems (IVP) of the form
\begin{equation}\label{e:IVP_ub}
\tfrac{d}{dt}Ex = Ax,\qquad Ex(0) = Ex_0,
\end{equation}
in a reflexive or separable Banach space $\calX$. Letting $L(\calX)$ denote the space of all bounded linear operators from $\calX$ into itself, we assume that either $E\in L(\calX)$ and $A : \calX\supset\dom A\to\calX$ is closed or $E : \calX\supset\dom E\to\calX$ is closed and $A\in L(\calX)$. We set $\dom(E,A) := \dom A$ in the first case and $\dom(E,A) := \dom E$ in the second, i.e., $\dom(E,A) = \dom E\cap\dom A$.

A {\em solution} of \eqref{e:IVP_ub} is a trajectory $x\in C(\R_0^+,\calX)$ with $x(t)\in\dom(E,A)$ for all $t\ge 0$, which satisfies $Ex\in C^1(\R_0^+,\calX)$ such that $\frac{d}{dt}(Ex)(t) = Ax(t)$ for all $t\ge 0$ and $Ex(0) = Ex_0$.

We define the {\em resolvent set} of the pencil $(E,A)$ as
$$
\rho(E,A) := \{s\in\C\,|\,sE-A : \dom (E,A)\to\calX\text{ is bijective}\}.
$$
The {\em spectrum} of $(E,A)$ is defined by $\sigma(E,A) := \C\setminus\rho(E,A)$, where we have intentionally excluded the point $\infty$.

We assume here and throughout the paper that the DAE in \eqref{e:IVP_ub} and the pencil $(E,A)$ are {\em regular}, i.e., that $\rho(E,A)\neq\emptyset$.

\subsection{Reduction to bounded coefficient operators}
We fix an arbitrary $\mu\in\rho(E,A)$ and define the bounded operators
$$
F := E(\mu E - A)^{-1}
\qquad\text{and}\qquad
B := A(\mu E - A)^{-1}.
$$
It is easy to see that if $x$ solves \eqref{e:IVP_ub}, then $z := (\mu E - A)x = \mu Ex - \frac d{dt}Ex$ is continuous and satisfies
\begin{equation}\label{e:IVP_bd}
\tfrac d{dt}Fz = Bz,\qquad Fz(0) = Fz_0,
\end{equation}
where $z_0 := (\mu E - A)x_0$. Conversely, each solution $z$ of \eqref{e:IVP_bd} transforms to the solution $x = (\mu E - A)^{-1}z$ of \eqref{e:IVP_ub}.

Therefore, we shall assume without loss of generality from Subsection \ref{ss:reduction_op} on that both operators $E$ and $A$ are bounded. We add here that the relation
$$
(sF-B)(\mu E - A) = sE-A, \qquad s\in\C,
$$
immediately yields
$$
\sigma(E,A) = \sigma(F,B).
$$
In particular, the pencil $(F,B)$ is regular with $\mu F - B = I$.

It will be shown below (cf.\ Lemma \ref{l:dae_sol}) that $\sigma(F,B)$ is bounded if and only if zero is an isolated point of the spectrum of the operator $(\mu F - B)^{-1}F = F = E(\mu E - A)^{-1}$.

\begin{ex}
{\bf (a)} If $K := A(\mu E - A)^{-1}$ is compact for some $\mu\neq 0$, we have $\mu F = I + K$, and it follows from the spectral theory for compact operators that zero is an isolated point of $\sigma(F)$. Since it is also a pole of the resolvent of $F$, the pencil $(F,B)$ has finite index, cf.\ Lemma \ref{l:easy_lem}, which is the subject of Section \ref{s:ind_finite}.

\smallskip\noindent
{\bf (b)} If $E$ is a bounded finite-rank operator, then so is $F = E(\mu E - A)^{-1}$ so that zero is an isolated spectral point of $F$.
\end{ex}

%

\subsection{Reduction to operators}\label{ss:reduction_op}
Consider the IVP \eqref{e:IVP_ub} with $E,A\in L(\calX)$. Next, we shall reduce the spectral properties of the pencil $(E,A)$ to those of a special operator. Fixing an arbitrary $\mu\in\rho(E,A)$, we apply the resolvent $(\mu E - A)^{-1}$ to both the DAE and the initial value condition and obtain the equivalent IVP
\begin{equation}\label{e:IVP_red}
\tfrac{d}{dt}T_\mu x = (\mu T_\mu - I)x,\qquad T_\mu x(0) = T_\mu x_0,
\end{equation}
where
$$
T_\mu := (\mu E-A)^{-1}E.
$$
Indeed, we have
$$
(\mu E-A)^{-1}A = (\mu E-A)^{-1}(\mu E - (\mu E - A)) = \mu T_\mu - I.
$$
Note that for $s\neq\mu$,
\begin{equation}\label{e:EAT}
sE-A = (s-\mu)(\mu E-A)\big(T_\mu - \tau_\mu(s)\big),
\end{equation}
where
$$
\tau_\mu(z) := \frac 1{\mu-z},\qquad z\in\C\setminus\{\mu\}.
$$
Therefore, for $s\neq\mu$,
\begin{equation}\label{e:corr}
s\in\sigma(E,A)\quad\Llra\quad\tau_\mu(s)\in\sigma(T_\mu).
\end{equation}
Moreover, it is easily seen that for $s,z\in\rho(E,A)$ we have
\begin{equation}\label{e:TT}
T_zT_s = -\frac{T_s-T_z}{s-z}.
\end{equation}
In particular, the operators $T_z$, $z\in\rho(E,A)$, all commute with one another.

Note that $0\in\sigma(T_\mu)$ as $E$ is assumed to be non-invertible. What we now would like to assume is that {\em zero is an isolated point of the spectrum of $T_\mu$}. In this case, we may apply the Riesz-Dunford spectral projector\footnote{where $C = \partial B_r(0)$, $r>0$ sufficiently small, is a positively oriented circle line around the zero point}
\begin{equation}\label{e:rdp}
P_0 = -\frac 1{2\pi i}\int_{C}(T_\mu - \la)^{-1}\,d\la
\end{equation}
and the complementary projection $I-P_0$ to \eqref{e:IVP_red}, respectively, to obtain the two decoupled IVPs:
\begin{align}
\begin{split}\label{e:IVP_proj}
\tfrac{d}{dt}S_0z_0 &= (\mu S_0 - I)z_0,\qquad S_0z_0(0) = S_0P_0x_0,\\
\tfrac{d}{dt}S_1z_1 &= (\mu S_1 - I)z_1,\qquad S_1z_1(0) = S_1(I-P_0)x_0,
\end{split}
\end{align}
where $S_0 = T_\mu|_{P_0\calX}\in L(P_0\calX)$ and $S_1 = T_\mu|_{(I-P_0)\calX}\in L((I-P_0)\calX)$. Then $S_1$ is boundedly invertible and $\sigma(S_0) = \{0\}$. Hence, the set of IVPs \eqref{e:IVP_proj} is equivalent to
\begin{align}
\tfrac{d}{dt}Tz_0 &= z_0,\qquad\qquad\quad\;\; Tz_0(0) = TP_0x_0,\label{e:IVP_qnp}\\
\dot z_1 &= (\mu - S_1^{-1})z_1,\qquad z_1(0) = (I-P_0)x_0,\label{e:IVP_inv}
\end{align}
where $T = (\mu S_0 - I)^{-1}S_0$. Note that also $\sigma(T) = \{0\}$.

Clearly, the IVP \eqref{e:IVP_inv} has the unique solution $z_1(t) = e^{(\mu-S_1^{-1})t}(I-P_0)x_0$ so that it remains to solve the IVP \eqref{e:IVP_qnp}. If $z_0$ is a solution, then the original problem \eqref{e:IVP_red} has the solution $x = z_0 + z_1$.

The next lemma immediately follows from \eqref{e:corr}.

\begin{lem}\label{l:dae_sol}
Zero is an isolated point of $\sigma(T_\mu)$ if and only if $\sigma(E,A)$ is bounded. In this case, the IVP \eqref{e:IVP_ub} has a solution if and only if the IVP \eqref{e:IVP_qnp} has a solution.
\end{lem}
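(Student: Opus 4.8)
The plan is to handle the two assertions separately: the first is a direct consequence of the spectral correspondence \eqref{e:corr} under the Möbius map $\tau_\mu$, while the second is merely a recollection that the chain of reductions carried out above in this subsection consists of solvability-preserving steps. For ``$0$ is isolated in $\sigma(T_\mu)$'' $\iff$ ``$\sigma(E,A)$ is bounded'', I would note that $\tau_\mu(z)=(\mu-z)^{-1}$ is inverted by $z=\mu-1/\lambda$ and satisfies $|z-\mu|=1/|\lambda|$, so it maps the exterior $\{z\in\C : |z-\mu|\ge R\}$ bijectively onto the punctured disc $\{\lambda\in\C : 0<|\lambda|\le 1/R\}$ for every $R>0$, and it never attains the value $0$. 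Since $0\in\sigma(T_\mu)$ (because $E$ is not boundedly invertible) and $\mu\in\rho(E,A)$, applying \eqref{e:corr} at every point of $\{|z-\mu|\ge R\}$ shows that $\{|z-\mu|\ge R\}\subseteq\rho(E,A)$ holds for some $R>0$ — i.e.\ $\sigma(E,A)$ is bounded — precisely when $\{0<|\lambda|\le 1/R\}\subseteq\rho(T_\mu)$ holds for some $R>0$ — i.e.\ $0$ is an isolated point of $\sigma(T_\mu)$.

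For the second assertion, I would simply record that in this subsection the IVP \eqref{e:IVP_ub} has been brought into the form \eqref{e:IVP_qnp} together with \eqref{e:IVP_inv} by three reductions, each of which is a two-sided equivalence at the level of solvability for a fixed initial value: (i) multiplication by the bounded, boundedly invertible operator $(\mu E-A)^{-1}$ turns \eqref{e:IVP_ub} into \eqref{e:IVP_red}; (ii) the commuting bounded projections $P_0$ and $I-P_0$ — available precisely in the case under consideration — split \eqref{e:IVP_red} into the decoupled pair \eqref{e:IVP_proj}, since $T_\mu$ leaves $P_0\calX$ and $(I-P_0)\calX$ invariant and a trajectory solves \eqref{e:IVP_red} if and only if its two components solve the respective equations; (iii) multiplication by the bounded, boundedly invertible operators $(\mu S_0-I)^{-1}$ (invertible because $\sigma(S_0)=\{0\}$ gives $\sigma(\mu S_0-I)=\{-1\}$) and $S_1^{-1}$ converts \eqref{e:IVP_proj} into \eqref{e:IVP_qnp} and \eqref{e:IVP_inv}. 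At each step one uses the routine fact that a bounded operator $B$ with bounded inverse maps $C(\R_0^+,\calX)$ into itself and that $Ex\in C^1$ is equivalent to $BEx\in C^1$, so the notion of solution is preserved and the transformed DAE and initial condition are exactly those displayed. Since \eqref{e:IVP_inv} is an ODE with bounded generator $\mu-S_1^{-1}$, it always has a solution; hence \eqref{e:IVP_ub} has a solution if and only if \eqref{e:IVP_qnp} does.

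The proof is essentially bookkeeping, and the only point that deserves attention is the direction of the correspondence $\tau_\mu$ in the first assertion: a punctured neighbourhood of $0$ in the spectral plane of $T_\mu$ corresponds to the \emph{exterior} of a disc centred at $\mu$ in the plane of the pencil, and one must track both the pole $\mu$ of $\tau_\mu$ (which lies in $\rho(E,A)$, so the excluded value $s=\mu$ in \eqref{e:corr} is harmless) and the value $0$, which is never attained by $\tau_\mu$ — matching the fact that $0$ always belongs to $\sigma(T_\mu)$. The decoupling in (ii) is standard once the commutation of $P_0$ and $I-P_0$ with $T_\mu$, and of $(\mu S_0-I)^{-1}$ and $S_1^{-1}$ with the direct-sum decomposition $\calX=P_0\calX\oplus(I-P_0)\calX$, has been noted.
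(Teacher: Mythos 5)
Your proof is correct and matches the paper's intent: the paper dispatches the lemma with the one-line remark that it ``immediately follows from \eqref{e:corr},'' leaving implicit precisely the two observations you spell out — that $\tau_\mu$ sends exteriors of discs around $\mu$ bijectively to punctured discs around $0$, and that the chain \eqref{e:IVP_ub} $\to$ \eqref{e:IVP_red} $\to$ \eqref{e:IVP_proj} $\to$ \eqref{e:IVP_qnp}, \eqref{e:IVP_inv} consists of solvability-preserving reductions with \eqref{e:IVP_inv} always solvable. Nothing is missing; you have simply written out the bookkeeping the paper delegates to the preceding discussion.
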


\subsection{Spectral sets}
A set $\sigma\subset\sigma(E,A)$ will be called a {\em spectral set} for the pencil $(E,A)$, if it is both open and closed in $\sigma(E,A)$. If $\sigma$ is a bounded spectral set for $(E,A)$, then we find a bounded open neighborhood $U$ of $\sigma$ whose boundary $C$ consists of a finite number of rectifiable Jordan curves, oriented in the positive sense, and such that $\ol U\cap\sigma(E,A) = \sigma$. We then define
$$
P_\sigma := \frac 1{2\pi i}\int_C(zE-A)^{-1}E\,dz = \frac 1{2\pi i}\int_CT_z\,dz.
$$
It is evident that this definition does not depend on the set $U$ and its boundary, but only on $\sigma$.

Parts of the following proposition have been proved in \cite[Theorem 5.1]{mga} for the special case of isolated points of $\sigma(E,A)$, i.e., $\sigma = \{\la_0\}$ with $\la_0\in\C$. Its proof can be found in the appendix of this paper.

\begin{prop}\label{p:proj}
Let $\sigma$ be a bounded spectral set of $(E,A)$, let $\mu\in\rho(E,A)$ be arbitrary, and choose $C$ as above such that $\mu$ is in the exterior of $C$. Then zero is in the exterior of $\tau_\mu(C)$, $\tau_\mu(C)$ is positively oriented, and $\tau_\mu(\sigma)$ is a spectral set for the operator $T_\mu$ which is contained in the interior of $\tau_\mu(C)$. Moreover,
\begin{equation}\label{e:Psigma}
P_\sigma = -\frac 1{2\pi i}\,\int_{\tau_\mu(C)}(T_\mu - \la)^{-1}\,d\la,
\end{equation}
i.e., $P_\sigma$ is the spectral projection of $T_\mu$ corresponding to the spectral set $\tau_\mu(\sigma)$.
\end{prop}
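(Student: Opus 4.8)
The plan is to reduce everything to the standard change-of-variables formula for contour integrals of operator-valued functions, via the substitution $\lambda = \tau_\mu(z)$. First I would verify the elementary geometric claims. Since $\mu$ lies in the exterior of $C$ and $\tau_\mu$ is a Möbius transformation with pole at $\mu$, the image $\tau_\mu(C)$ is a finite union of rectifiable Jordan curves avoiding $0 = \tau_\mu(\infty)$; more precisely, $0$ is in the exterior of $\tau_\mu(C)$ because the exterior of $C$ (the component containing $\mu$) is mapped by $\tau_\mu$ onto a neighborhood of $\infty$, while the \emph{other} exterior components together with the interior region enclosing $\sigma$ map to the complement, so $\tau_\mu(\sigma)$ lands in the bounded region enclosed by $\tau_\mu(C)$. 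The orientation statement requires a small computation: $\tau_\mu'(z) = 1/(\mu-z)^2 \neq 0$, and one checks that $\tau_\mu$ is orientation-preserving as a map of the plane away from its pole, but because passing through the pole reverses the notion of ``interior,'' the curve $\tau_\mu(C)$ with the induced orientation is in fact positively oriented around $\tau_\mu(\sigma)$. I would justify this by a winding-number argument: for $w$ in the interior region near $\tau_\mu(\sigma)$, its $\tau_\mu$-preimage $z$ is interior to $C$, and $\frac{1}{2\pi i}\int_{\tau_\mu(C)} \frac{d\lambda}{\lambda - w}$ equals $\frac{1}{2\pi i}\int_C \frac{\tau_\mu'(z)\,dz}{\tau_\mu(z) - \tau_\mu(w)}$, which after the identity $\tau_\mu(z) - \tau_\mu(w) = \frac{z-w}{(\mu-z)(\mu-w)}$ simplifies to $\frac{1}{2\pi i}\int_C \frac{dz}{(z-w)}\cdot\frac{1}{\mu - z}\cdot(\mu-w)$; but this is not quite $1$, so instead I compute it as $\frac{1}{2\pi i}\int_C\!\big(\frac{1}{z-w} - \frac{1}{z-\mu}\big)(\mu-w)/(\mu-w)\,dz$ after partial fractions, giving $1 - 0 = 1$ since $w$ is inside $C$ and $\mu$ outside. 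Hence $\tau_\mu(C)$ is positively oriented around $\tau_\mu(\sigma)$, and that $\tau_\mu(\sigma)$ is a spectral set for $T_\mu$ follows from the spectral correspondence \eqref{e:corr} together with boundedness of $\sigma$.

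The core identity \eqref{e:Psigma} I would obtain directly from \eqref{e:EAT}. Starting from $P_\sigma = \frac{1}{2\pi i}\int_C (zE-A)^{-1}E\,dz$ and using \eqref{e:EAT}, which reads $zE - A = (z-\mu)(\mu E - A)(T_\mu - \tau_\mu(z))$, we get $(zE-A)^{-1} = \frac{1}{z-\mu}(T_\mu - \tau_\mu(z))^{-1}(\mu E - A)^{-1}$, hence $(zE-A)^{-1}E = \frac{1}{z-\mu}(T_\mu - \tau_\mu(z))^{-1}T_\mu$, recalling $T_\mu = (\mu E - A)^{-1}E$. Now I substitute $\lambda = \tau_\mu(z)$, so $d\lambda = \frac{dz}{(\mu-z)^2} = \frac{dz}{(z-\mu)^2}$, i.e. $\frac{dz}{z-\mu} = (z-\mu)\,d\lambda$; and since $z - \mu = -1/\lambda$ (from $\lambda = 1/(\mu - z)$), this gives $\frac{dz}{z-\mu} = -\frac{1}{\lambda}\,d\lambda$. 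Therefore
$$
P_\sigma = \frac{1}{2\pi i}\int_{\tau_\mu(C)} (T_\mu - \lambda)^{-1}T_\mu \cdot\Big(\!-\frac{1}{\lambda}\Big)\,d\lambda = -\frac{1}{2\pi i}\int_{\tau_\mu(C)} \frac{1}{\lambda}\,T_\mu(T_\mu-\lambda)^{-1}\,d\lambda.
$$
The final step is the algebraic identity $\frac{1}{\lambda}T_\mu(T_\mu - \lambda)^{-1} = (T_\mu - \lambda)^{-1} + \frac{1}{\lambda}I$, valid for $\lambda \neq 0$, which follows from $T_\mu = (T_\mu - \lambda) + \lambda$. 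Since $0$ is in the exterior of $\tau_\mu(C)$, the term $\frac{1}{2\pi i}\int_{\tau_\mu(C)} \frac{1}{\lambda}\,d\lambda$ vanishes, leaving exactly $P_\sigma = -\frac{1}{2\pi i}\int_{\tau_\mu(C)} (T_\mu - \lambda)^{-1}\,d\lambda$, which is \eqref{e:Psigma}. That this is the Riesz--Dunford projection of $T_\mu$ for the spectral set $\tau_\mu(\sigma)$ is then immediate from the orientation and enclosure facts established in the first step, and independence of the choice of $U$ and $C$ is inherited from the corresponding independence for the Riesz--Dunford functional calculus.

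I expect the main obstacle to be the careful bookkeeping of orientations and of which connected components of $\C\setminus C$ map to the ``interior'' versus the ``exterior'' of $\tau_\mu(C)$ — in particular making fully rigorous the claim that the pole $\mu$ sitting in the exterior of $C$ is precisely what forces $0$ into the exterior of $\tau_\mu(C)$ and simultaneously \emph{preserves} (rather than reverses) the positive orientation around $\tau_\mu(\sigma)$. The cleanest way to sidestep delicate topological arguments is the winding-number computation sketched above: it simultaneously pins down orientation and enclosure, and it is robust to $C$ being a union of several Jordan curves. The operator-algebra manipulations themselves are routine once the substitution is set up, and the convergence of all integrals is guaranteed because $\tau_\mu$ is smooth and nonvanishing in its derivative on the compact curve $C$, which stays away from $\mu$.
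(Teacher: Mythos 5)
Your proof is correct, and the core is the same as the paper's: derive $T_z = \tfrac{1}{z-\mu}(T_\mu-\tau_\mu(z))^{-1}T_\mu$ from \eqref{e:EAT}, substitute $\lambda=\tau_\mu(z)$, split off the scalar term $\tfrac{1}{\lambda}I$, and kill it because $0$ lies outside $\tau_\mu(C)$, while the winding-number computation with partial fractions $\tfrac{1}{z-z_0}-\tfrac{1}{z-\mu}$ handles enclosure. Where you diverge from the paper is in how you handle the two ``soft'' points. First, the paper proves $P_\sigma^2 = P_\sigma$ from scratch, \emph{before} establishing \eqref{e:Psigma}, by integrating over two nested curves $C_1\subset\operatorname{int}C_2$ and invoking the pencil resolvent identity \eqref{e:TT} to reduce $T_zT_s$ to a partial-fraction form; you instead get idempotence for free once you have identified $P_\sigma$ with a Riesz--Dunford projector. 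Second, the paper deduces that $\tau_\mu(C)$ is positively oriented by an elimination argument (``otherwise $-P_\sigma$ would be a projection''), whereas you extract the orientation directly from the winding number being $+1$. Both routes are valid, but yours is more economical: the winding-number calculation already contains the orientation and enclosure simultaneously, which makes the paper's separate $P_\sigma^2 = P_\sigma$ computation and the ``otherwise $-P_\sigma$...'' argument redundant. One small omission on your side: you should verify that $0$ is exterior to $\tau_\mu(C)$ by the same winding-number device rather than by the informal Möbius-geometry sketch in your opening paragraph; the computation $\tfrac{1}{2\pi i}\int_{\tau_\mu(C)}\tfrac{d\lambda}{\lambda} = -\tfrac{1}{2\pi i}\int_C \tfrac{dz}{z-\mu} = 0$ does it cleanly, and this is exactly how the paper treats that step.
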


\section{DAEs of finite index}\label{s:ind_finite}
Next, we carry over the notion of the index of a DAE or pencil from the finite-dimensional case to the infinite-dimensional situation.

\begin{defn}
The regular DAE $\frac d{dt}Ex = Ax$ (or the regular pencil $(E,A)$) is said to have {\em finite index} $m\in\N$ if $\sigma(E,A)$ is bounded and $\|(sE - A)^{-1}\| = O(|s|^{m-1})$ as $|s|\to\infty$, where $m$ is minimal, i.e., $\|(sE - A)^{-1}\|\neq O(|s|^{m-2})$ as $|s|\to\infty$.
\end{defn}

Recall the operators $T_\mu = (\mu E-A)^{-1}E$, where $\mu\in\rho(E,A)$. The next lemma is an immediate consequence of \eqref{e:EAT}.

\begin{lem}\label{l:easy_lem}
The regular pencil $(E,A)$ has finite index $m$ if and only if zero is a pole of the resolvent of $T_\mu$ of order $m$.
\end{lem}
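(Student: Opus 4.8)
The plan is to push the factorization \eqref{e:EAT} through the Möbius substitution $\lambda=\tau_\mu(s)=(\mu-s)^{-1}$, which turns the behaviour of $\|(sE-A)^{-1}\|$ as $|s|\to\infty$ into that of $\|(T_\mu-\lambda)^{-1}\|$ as $\lambda\to 0$, and then to read off the order of the pole from the latter.

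Concretely, I would first invert \eqref{e:EAT}. Since $E,A$ are now assumed bounded and $\mu\in\rho(E,A)$, both $\mu E-A$ and $(\mu E-A)^{-1}$ are bounded, and for $s\in\rho(E,A)$, $s\neq\mu$ --- equivalently, by \eqref{e:corr}, for $\lambda:=\tau_\mu(s)\in\rho(T_\mu)\setminus\{0\}$ --- one obtains, using $(s-\mu)^{-1}=-\tau_\mu(s)$,
$$
(sE-A)^{-1}=-\lambda\,(T_\mu-\lambda)^{-1}(\mu E-A)^{-1}.
$$
As $(\mu E-A)^{-1}$ is boundedly invertible, $\|(sE-A)^{-1}\|$ and $|\lambda|\,\|(T_\mu-\lambda)^{-1}\|$ agree up to fixed multiplicative constants. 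Moreover $\lambda\to 0$ exactly when $|s|\to\infty$, with $\tfrac12|\lambda|^{-1}\le|s|\le\tfrac32|\lambda|^{-1}$ once $|\lambda|$ is small, so $|s|^{k}$ and $|\lambda|^{-k}$ are comparable for each fixed $k$. Hence $\|(sE-A)^{-1}\|=O(|s|^{m-1})$ as $|s|\to\infty$ is equivalent to $\|(T_\mu-\lambda)^{-1}\|=O(|\lambda|^{-m})$ as $\lambda\to 0$, and the minimality condition $\|(sE-A)^{-1}\|\neq O(|s|^{m-2})$ is equivalent to $\|(T_\mu-\lambda)^{-1}\|\neq O(|\lambda|^{-(m-1)})$.

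By Lemma \ref{l:dae_sol} (or directly from \eqref{e:corr}), boundedness of $\sigma(E,A)$ is equivalent to $0$ being isolated in $\sigma(T_\mu)$; since both sides of the asserted equivalence entail this, I would assume it and set $R(\lambda):=(T_\mu-\lambda)^{-1}$, which is then holomorphic on a punctured disc around $0$ with operator-norm Laurent expansion $R(\lambda)=\sum_{n\in\Z}\lambda^n A_n$. The remaining ingredient is the classical fact that $0$ is a pole of $R$ of order exactly $m$ if and only if $\|R(\lambda)\|=O(|\lambda|^{-m})$ but not $O(|\lambda|^{-(m-1)})$ as $\lambda\to 0$: one implication is immediate because $\lambda^m R(\lambda)$ is then holomorphic at $0$, and for the converse the Cauchy estimate $\|A_n\|\le r^{-n}\sup_{|\lambda|=r}\|R(\lambda)\|$ forces $A_n=0$ for $n<-m$ as $r\to 0$, so the principal part is finite; as $0\in\sigma(T_\mu)$ (recall $E$ is not boundedly invertible) the singularity is then a genuine pole, of order $\le m$, and the failure of $O(|\lambda|^{-(m-1)})$ pins the order at $m$. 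Combining this with the previous paragraph yields the lemma, with the same index $m$.

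I do not anticipate a genuine obstacle; the only mildly non-mechanical step is the equivalence between polynomial growth of a resolvent near a singularity and finiteness of the pole order, and that is standard --- provable by the Laurent-coefficient estimate above, or available from the spectral-theory literature.
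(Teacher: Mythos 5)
Your proof is correct and takes the same route the paper intends when it states that the lemma ``is an immediate consequence of \eqref{e:EAT}'': you invert that factorization, compare norms using that $(\mu E-A)^{-1}$ is boundedly invertible, transport $|s|\to\infty$ to $\lambda=\tau_\mu(s)\to 0$, and then apply the standard Laurent/Cauchy-estimate characterization of pole order (noting $0\in\sigma(T_\mu)$ rules out a removable singularity). This is precisely the reasoning the paper leaves implicit, so no further comparison is needed.
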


Let $T : \calX\to\calX$ be a linear operator. Note that for all $k\in\N$ we have $\ker T^k\subset\ker T^{k+1}$ and that $\ker T^k = \ker T^{k+1}$ implies that $\ker T^{k+j}=\ker T^k$ for all $j\in\N$. Similarly, $\ran T^{k+1}\subset\ran T^k$ and equality implies $\ran T^{k+j} = \ran T^k$ for all $j\in\N$. Hence, the closed subspaces $\ker T^k$ are ascending and the subspaces $\ran T^k$ are descending. The {\em ascent} and the {\em descent} of $T$ are defined as
\begin{align*}
\alpha(T) &:= \min\{n\in\N : \ker T^{n+1} = \ker T^n\}
\quad\text{and}\\
\delta(T) &:= \min\{n\in\N : \ran T^{n+1} = \ran T^n\},
\end{align*}
respectively. If the minimum does not exist, we put $\alpha(T) = \infty$ or $\delta(T) = \infty$, accordingly.

\begin{thm}[{\cite[Corollary 20.5]{m}}]\label{t:mueller}
Let $T\in L(\calX)$ such that $\alpha(T),\,\delta(T)<\infty$. Then $\alpha(T) = \delta(T) =: n$, $\ran T^n$ is closed, and $\calX = \ker T^n\oplus\ran T^n$.
\end{thm}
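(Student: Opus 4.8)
The plan is to separate the purely algebraic part of the assertion ($\alpha(T)=\delta(T)$ and the direct-sum decomposition) from the single analytic point where completeness of $\calX$ is genuinely used (closedness of $\ran T^n$).

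First I would fix the exponent $n:=\max\{\alpha(T),\delta(T)\}<\infty$, which satisfies $\ker T^n=\ker T^{2n}$ and $\ran T^n=\ran T^{2n}$, and observe that this alone gives the \emph{algebraic} direct sum $\calX=\ker T^n\oplus\ran T^n$: if $y=T^nx\in\ker T^n$ then $T^{2n}x=T^n y=0$, so $x\in\ker T^{2n}=\ker T^n$ and $y=0$; and for arbitrary $x$ one has $T^nx=T^{2n}z$ for some $z$, whence $x=(x-T^nz)+T^nz$ with $x-T^nz\in\ker T^n$ and $T^nz\in\ran T^n$. Both summands are $T$-invariant, and $T_0:=T|_{\ker T^n}$ is nilpotent (since $T_0^n=0$), say of index $\ell\le n$, while $T_1:=T|_{\ran T^n}$ is a linear bijection: injective because $\ker T_1\subset\ker T^n\cap\ran T^n=0$, surjective because $T_1(\ran T^n)=\ran T^{n+1}=\ran T^n$. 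Intersecting the decomposition with $\ker T^k$ and $\ran T^k$ and using that $T_1^k$ stays bijective, one reads off $\ker T^k=\ker T_0^k$ and $\ran T^k=\ran T_0^k\oplus\ran T^n$ for every $k$; for the nilpotent $T_0$ these chains are strictly monotone up to $k=\ell$ and constant afterwards, so $\alpha(T)=\delta(T)=\ell$ — in particular $\ell=n$, so the decomposition already stands at the correct exponent.

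The hard part is to show $\ran T^n$ is closed, and here I would pass to a quotient. Put $S:=T^n$, so that $\ker S=\ker S^2$, $\ran S=\ran S^2$, and $\calK:=\ker S$ is a closed subspace. Since $S(\calK)=\{0\}\subset\calK$, the operator $S$ induces a bounded operator $\tau$ on the Banach space $\calX/\calK$ via $\tau(x+\calK):=Sx+\calK$. This $\tau$ is injective — $\tau(x+\calK)=0$ forces $S^2x=0$, i.e. $x\in\ker S^2=\ker S=\calK$ — and surjective, because the quotient map $\pi$ is already onto when restricted to $\ran S$ (as $\calX=\calK+\ran S$), so that $\ran\tau=\pi(\ran S)=\calX/\calK$; by the open mapping theorem $\tau$ is therefore \emph{boundedly} invertible. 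Now the induced bounded injection $\wh S:\calX/\calK\to\calX$, $\wh S(x+\calK):=Sx$, has range $\ran\wh S=\ran S$ and satisfies $\pi\circ\wh S=\tau$; hence $\|\wh S\xi\|\ge\|\pi\|^{-1}\|\pi\wh S\xi\|=\|\pi\|^{-1}\|\tau\xi\|\ge(\|\pi\|\,\|\tau^{-1}\|)^{-1}\|\xi\|$ for all $\xi$, so $\wh S$ is bounded below and its range $\ran S=\ran T^n$ is closed.

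Everything in the first two paragraphs is elementary linear algebra, valid over any vector space; the genuinely delicate point is the closedness argument, and the subtlety I would keep an eye on there is the surjectivity of the induced map $\tau$ on $\calX/\ker T^n$ — this is exactly where the identity $\calX=\ker T^n+\ran T^n$ from the first step must be used a second time, and it is what makes the open mapping theorem (and hence the whole argument) available.
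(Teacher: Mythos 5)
The paper does not prove this theorem; it is imported verbatim from M\"uller's monograph (Corollary~20.5), so there is no in-paper argument to compare against. Your self-contained proof is correct. The algebraic part---strict monotonicity of the kernel and range chains of the nilpotent piece up to its index, the algebraic splitting $\calX=\ker T^n\oplus\ran T^n$ from $\ker T^n=\ker T^{2n}$ and $\ran T^n=\ran T^{2n}$, and reading off $\alpha(T)=\delta(T)$ from the decomposition $T=T_0\oplus T_1$ with $T_0$ nilpotent and $T_1$ bijective---is exactly the standard ``stabilized chains'' route and needs no completeness. You are also right that the only genuinely analytic step is the closedness of $\ran T^n$, and your quotient argument handles it cleanly: the induced operator $\tau$ on $\calX/\ker T^n$ is a bounded bijection (injectivity from $\ker S^2=\ker S$, surjectivity from $\calX=\ker S+\ran S$), hence boundedly invertible by the open mapping theorem; since $\pi\circ\wh S=\tau$ and $\|\pi\|\le 1$, the lift $\wh S$ is bounded below and $\ran\wh S=\ran T^n$ is closed. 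One remark worth keeping in mind: the algebraic direct sum $\calX=\ker T^n\oplus\ran T^n$ with $\ker T^n$ closed does \emph{not} by itself give closedness of the complement (nor boundedness of the associated projection, which would be a circular route via the closed graph theorem), so the passage through $\tau$ and the open mapping theorem is not a luxury but the essential step---your proposal identifies this correctly.
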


The next proposition shows in particular that the condition in Theorem \ref{t:mueller} means that zero is a pole of the resolvent of $T$. In the proof we shall frequently make use of the relation
\begin{equation}\label{e:geometric}
(T-\la)^{-1}(T^{n}-\la^{n}) = \sum_{k=0}^{n-1}\la^kT^{n-k-1},
\end{equation}
which holds for $T\in L(\calX)$, $n\in\N_{\ge 1}$, and $\la\in\rho(T)$.

\begin{prop}\label{p:fin_ind}
For $T\in L(\calX)$ and $m\in\N_{\ge 1}$ the following statements are equivalent:
\begin{enumerate}
\item[{\rm (i)}]   Zero is a pole of the resolvent of $T$ of order $m$.
\item[{\rm (ii)}]  Zero is an isolated point of $\sigma(T)$ and $T^m\int_C(T-\la)^{-1}\,d\la = 0$ \braces{$m$ minimal\,}, where $C=\partial B_r(0)$ such that $\ol{B_r(0)}\cap\sigma(T) = \{0\}$.
\item[{\rm (iii)}] Zero is an isolated point of $\sigma(T)$ and $\delta(T) = m$.
\item[{\rm (iv)}]  $\calX = \ker T^m\oplus \ran T^m$ \braces{$m$ minimal\,}.
\item[{\rm (v)}]   $\alpha(T) = \delta (T) = m$.
\item[{\rm (vi)}]  There exists a sequence $(\la_n)\subset\rho(T)$ such that $\la_n\to 0$ and
\begin{align}\label{e:seq}
\|(T-\la_n)^{-1}\| = O(|\la_n|^{-m})\,\text{ as }\,n\to\infty\quad\text{\braces{$m$ minimal\,}},
\end{align}
and $\ran T^m$ is closed.
\item[{\rm (vii)}] There exists a sequence $(\la_n)\subset\rho(T)$ with $\la_n\to 0$ as in \eqref{e:seq}, and $\delta(T) < \infty$.
\end{enumerate}
\end{prop}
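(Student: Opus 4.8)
Here is how I would organise the proof.

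\medskip

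\emph{Backbone and the easy equivalences.} The whole argument revolves around one observation: whenever zero is isolated in $\sigma(T)$, the Riesz--Dunford projection $P_0$ from \eqref{e:rdp} splits $\calX = P_0\calX\oplus(I-P_0)\calX$ into closed $T$-invariant subspaces on which $T$ acts as a quasi-nilpotent operator $T_0$ and a boundedly invertible operator $T_1$. Since then $\ker T^k = \ker T_0^k$, $\ran T^k = \ran T_0^k\oplus(I-P_0)\calX$, and $(T-\la)^{-1}$ decomposes correspondingly, we get $\alpha(T)=\alpha(T_0)$, $\delta(T)=\delta(T_0)$, and ``zero is a pole of $(T-\la)^{-1}$ of order $m$'' $\Leftrightarrow$ ``$T_0^m=0\neq T_0^{m-1}$''. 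As $T^mP_0 = T_0^mP_0$, this gives (i) $\Leftrightarrow$ (ii) immediately (both mean $T_0^m=0\neq T_0^{m-1}$, and both assert zero is isolated), and it reduces (iii) $\Rightarrow$ (v) to the lemma that a quasi-nilpotent $S$ with $\delta(S)<\infty$ is nilpotent.

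\medskip

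\emph{The nilpotency lemma and the classical chain.} To prove the lemma, set $m:=\delta(S)$ and note $\calX_0 = \ran S + \ker S^m$ (apply $\ran S^m = \ran S^{m+1}$ to each vector). Assuming $S^m\neq0$, the closed $S$-invariant subspace $\ker S^m$ is proper, so $S$ induces on the nonzero Banach space $\calX_0/\ker S^m$ an operator $\bar S$ that is surjective and still quasi-nilpotent (the resolvent $(S-\la)^{-1}=-\sum_{k\ge0}\la^{-k-1}S^k$, $\la\neq0$, leaves $\ker S^m$ invariant). But the adjoint of a surjective bounded operator is bounded below by the open mapping theorem, so $\|(\bar S^{*})^n\|\ge c^n$ for some $c>0$, forcing $r(\bar S^{*})=r(\bar S)>0$ — a contradiction; hence $S^m=0$. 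Applied to $S=T_0$ this yields $\alpha(T)=\delta(T)=m$, i.e.\ (v). The equivalence (iv) $\Leftrightarrow$ (v) is read off from Theorem~\ref{t:mueller}: each of (iv), (v) forces $\alpha(T),\delta(T)\le m<\infty$, whence $\alpha(T)=\delta(T)$ and $\calX=\ker T^{\alpha(T)}\oplus\ran T^{\alpha(T)}$, and comparing with the minimal exponent gives $\alpha(T)=m$ in both directions. Finally (v) $\Rightarrow$ (i): on the M\"uller decomposition $T|_{\ran T^m}$ is injective ($\ker T\cap\ran T^m=\{0\}$) and surjective ($\ran T^{m+1}=\ran T^m$), hence invertible, so $\sigma(T)=\{0\}\cup\sigma(T|_{\ran T^m})$ with $0$ isolated, and the nilpotent summand $T|_{\ker T^m}$ contributes exactly a pole of order $m$.

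\medskip

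\emph{The resolvent-growth conditions.} For (vi), (vii) the bound $\|(T-\la_n)^{-1}\|=O(|\la_n|^{-m})$ is only known along a sequence $\la_n\to0$, and the tool is \eqref{e:geometric}. Restricting \eqref{e:geometric} to $\ker T^{m+1}$ gives $(T-\la)^{-1}x=-\sum_{j=1}^{m+1}\la^{-j}T^{j-1}x$ for $x\in\ker T^{m+1}$, so $\la_n^{m+1}(T-\la_n)^{-1}x\to-T^mx$ while the left-hand side has norm $O(|\la_n|)$; hence $T^mx=0$, i.e.\ $\alpha(T)\le m$. In case (vii), this together with $\delta(T)<\infty$ lets Theorem~\ref{t:mueller} take over and produces (v), and the pole order $n$ then satisfies $n\le m$ (just shown) and $n\ge m$ (a pole of order $\le m-1$ would give $\|(T-\la)^{-1}\|=O(|\la|^{-(m-1)})$, contradicting minimality of $m$). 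In case (vi) one uses in addition that $\ran T^m$ is a \emph{closed} $T$-invariant subspace on which each $T-\la_n$ restricts to a bijection with inverse $(T-\la_n)^{-1}|_{\ran T^m}$; writing $(T-\la_n)^{-1}T^mz=\la_n^m(T-\la_n)^{-1}z+\sum_{k=0}^{m-1}\la_n^kT^{m-k-1}z$ shows $\{(T|_{\ran T^m}-\la_n)^{-1}\}_n$ is pointwise bounded on $\ran T^m$, so by the uniform boundedness principle it is norm-bounded, whence $\operatorname{dist}(0,\sigma(T|_{\ran T^m}))>0$; thus $T|_{\ran T^m}$ is invertible, $\ran T^{m+1}=\ran T^m$, $\delta(T)\le m$, and Theorem~\ref{t:mueller} finishes. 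The reverse implications (i) $\Rightarrow$ (vi), (i) $\Rightarrow$ (vii) are immediate: the principal part of the Laurent expansion of $(T-\la)^{-1}$ at $0$ has exactly $m$ terms with nonzero leading coefficient $\lim_{\la\to0}\la^m(T-\la)^{-1}$, so $\|(T-\la)^{-1}\|$ is bounded above and below by constant multiples of $|\la|^{-m}$ near $0$; hence any sequence $\la_n\to0$ in $\rho(T)$ works, $m$ is minimal, $\ran T^m=(I-P_0)\calX$ is closed, and $\delta(T)=m<\infty$.

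\medskip

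\emph{Expected obstacles.} The two genuinely nontrivial points are (a) the lemma ``quasi-nilpotent $+$ finite descent $\Rightarrow$ nilpotent'': the naive attempt to restrict $S$ to its stable range $\ran S^m$ fails because that subspace need not be closed, so one must instead quotient by the (automatically closed) subspace $\ker S^m$ and run the adjoint/open-mapping argument; and (b) upgrading the merely sequential resolvent estimate in (vi) to spectral information about $T|_{\ran T^m}$, where the uniform-boundedness argument — available precisely because of the closed-range hypothesis — is the decisive move. The remainder is bookkeeping with the Riesz decomposition, with \eqref{e:geometric}, with Theorem~\ref{t:mueller}, and with the minimality of $m$.
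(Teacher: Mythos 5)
Your proof is correct, and it takes a genuinely different route from the paper's at two key junctures. First, for the step ``zero isolated and $\delta(T)=m$ implies nilpotent part'' (essentially (iii)$\Rightarrow$(v)), the paper simply cites Grabiner's theorem that a quasi-nilpotent operator with finite descent is nilpotent, whereas you give a self-contained proof: from $\ran S^m=\ran S^{m+1}$ you get $\calX=\ran S+\ker S^m$, pass to the quotient $\calX/\ker S^m$ (which is nonzero if $S^m\neq 0$), observe the induced operator $\bar S$ is surjective and quasi-nilpotent, and derive a contradiction from the lower bound $\|(\bar S^{*})^n\|\ge c^n$ that surjectivity forces on the adjoint. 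That argument is sound and makes the proposition essentially self-contained. Second, and more significantly, for (vi)$\Rightarrow$(vii) the paper passes to the adjoint $T'$ and uses a sequential Banach--Alaoglu / weak${}^*$-closedness argument, which is precisely why the paper carries the standing assumption that $\calX$ be reflexive or separable; you instead work directly on the closed subspace $\ran T^m$, use the identity $(T-\la_n)^{-1}T^m=\la_n^m(T-\la_n)^{-1}+\sum_{k=0}^{m-1}\la_n^k T^{m-1-k}$ to get pointwise boundedness of the restricted resolvents, invoke the uniform boundedness principle to get a uniform bound $M$, and then place $0$ in $\rho(T|_{\ran T^m})$ because $B_{1/M}(\la_n)\subset\rho(T|_{\ran T^m})$ while $\la_n\to 0$. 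This is cleaner, more elementary, and dispenses with the reflexivity/separability hypothesis for this implication. The remaining pieces --- the reduction via the Riesz projection, the minimality bookkeeping, the estimate $\alpha(T)\le m$ from \eqref{e:geometric}, and (v)$\Rightarrow$(i) via the M\"uller decomposition --- match the paper's structure.
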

\begin{proof}
Assume that all items except (iii) are equivalent. Then (ii) and (v) certainly imply (iii). Conversely, if (iii) holds, denote the spectral projection for $T$ with respect to the spectral set $\sigma = \{0\}$ by $P_0$. Then $T_0:=T|_{P_0\calX}$ is quasi-nilpotent (i.e., $\sigma(T_0)=\{0\}$) and $\delta(T_0) = m$. Hence, \cite[Theorem 2]{g} implies that $T_0$ is nilpotent. Consequently, we have $\alpha(T) = \delta(T) = m$ and thus (v). This shows that we may neglect (iii) in the following.

\smallskip\noindent
(i)$\Slra$(ii) This is the statement of \cite[Theorem VII.3.18]{ds}.

\smallskip\noindent
(ii)$\Sra$(iv) Let $P_0 = -\frac 1{2\pi i}\int_C(T-\la)^{-1}\,d\la$ be the spectral projection of $T$ with respect to the spectral set $\{0\}$. Then $T^mP_0=0$ means that $P_0\calX = \ker T^m$. Hence $\calX = \ker T^m\oplus (I-P_0)\calX$ implies that $\ran T^m = T^m(I-P_0)\calX = (I-P_0)\calX$ as $T|_{(I-P_0)\calX}$ is boundedly invertible. As to the minimality of $m$, assume that $\calX = \ker T^k\oplus\ran T^k$ for some $1\le k\le m$. Then $\ran T^k\subset\ran T^{2k}$ and hence $\ran T^k\subset\ran T^{jk}$ for all $j\in\N$. Choose $j\in\N$ such that $jk\ge m$. Then from $T^kP_0\calX = P_0T^k\calX\subset P_0T^{jk}\calX = T^{jk}P_0\calX = \{0\}$ and the minimality in (ii) we obtain $k=m$.

\smallskip\noindent
(iv)$\Sra$(v) Let $x\in\ker T^{m+1}$. Then also $x\in\ker T^{2m}$ so that $T^mx\in\ker T^m\cap\ran T^m=\{0\}$, i.e., $x\in\ker T^m$. It follows that $\alpha(T)\le m$. If $y\in\ran T^m$, $y = T^mx$ with some $x\in\calX$, then $x = u + T^mz$ with some $u\in\ker T^m$ and $z\in\calX$. Thus, $y = T^{2m}z\in\ran T^{m+1}$. This implies $\delta(T)\le m$. Now, (v) follows from Theorem \ref{t:mueller} and the minimality in (iv).

\smallskip\noindent
(v)$\Sra$(i) By Theorem \ref{t:mueller}, $\ran T^m$ is closed and $\calX = \ker T^m\oplus\ran T^m$. With respect to this decomposition, $T$ decomposes as $T = N\oplus S$, where $N\in L(\ker T^m)$ is nilpotent ($N^m=0$, $N^{m-1}\neq 0$) and $S\in L(\ran T^m)$ is boundedly invertible. In particular, $\sigma(T) = \sigma(N)\cup\sigma(S) = \{0\}\cup\sigma(S)$, which shows that zero is an isolated point of $\sigma(T)$. Now, (i) follows from the representation $(N-\la)^{-1} = -\sum_{j=-m}^{-1}\la^jN^{-j-1}$ of the resolvent of the nilpotent operator $N$, which can be deduced from \eqref{e:geometric}.

\smallskip
Before we turn to the proof of (i)$\Sra$(vi)$\Sra$(vii)$\Sra$(i), let us show that the existence of a sequence $(\la_n)$ as in (vi) and (vii) implies $\alpha(T)\le m$. Indeed, \eqref{e:geometric} with $n=m+1$ yields for $\la\in\rho(T)$ that
$$
(T-\la)^{-1}(T^{m+1}-\la^{m+1}) = \sum_{k=0}^{m}\la^kT^{m-k}.
$$
Applying this to $x\in\ker T^{m+1}$ with $\la = \la_n$ and letting $n\to\infty$ in fact yields $T^mx=0$, i.e., $x\in\ker T^m$.

\smallskip\noindent
(i)$\Sra$(vi) We can choose any sequence $(\la_n)$ such that $\la_n\to 0$ to satisfy the requirements in (vi). The closedness of $\ran T^m$ follows from (i)$\Slra$(iv) and Theorem \ref{t:mueller}.

\smallskip\noindent
(vi)$\Sra$(vii) The following argumentation is valid if $\calX$ is separable: Since $\ran T^m$ is closed, so is $\ran(T')^m$. Here, $T' : \calX'\to\calX'$ denotes the adjoint of $T$. Note that $\sigma(T') = \sigma(T)$ and $\|(T'-\la)^{-1}\| = \|(T-\la)^{-1}\|$ for $\la\in\rho(T)$. Hence, by the intermediate part of this proof, $\alpha(T')\le m$. Hence, we only have to prove that $\ran(T')^{m+1} = \ran(T')^m$, since then $\ran(T')^{m+1}$ is closed and consequently $\ran T^{m+1} = {}^\perp\!\ker(T')^{m+1} = {}^\perp\!\ker(T')^{m} = \ran T^m$. For this, let $x_j'\in\ran(T')^j$ for $j=0,\ldots,m$ such that $T'x_j' = x_{j+1}'$, $j=0,\ldots,m-1$. It follows from \eqref{e:geometric} that for $\la\in\rho(T')$ we have
$$
(T'-\la)^{-1}x_m' = \sum_{j=0}^{m-1}\la^jx_{m-1-j}' + \la^m(T'-\la)^{-1}x_0'.
$$
Therefore, due to the property of $(\la_n)$, the sequence $(u_n')$ with $u_n' = (T'-\la_n)^{-1}x_m'\in\ran(T')^m$ is bounded. By the sequential Banach-Alaoglu theorem (see, e.g., \cite[Theorem 3.17]{r}) $(u_n')$ has a weak$^*$-convergent subsequence. Without loss of generality, we may assume that this subsequence is $(u_n')$ itself. Set $u' := \wslim_{n\to\infty}u_n'$. As $\ran(T')^m$ is weak${}^*$-closed (see \cite[Theorem 4.14]{r}), we have $u'\in\ran(T')^m$. Moreover,
$$
T'u' = \wslim_{n\to\infty}\,T'(T'-\la_n)^{-1}x_m' = x_m' + \wslim_{n\to\infty}\,\la_nu_n' = x_m',
$$
and thus $x_m'\in\ran(T')^{m+1}$.

If $\calX$ is reflexive, we can prove $\ran T^{m+1} = \ran T^m$ by proceeding similarly as in the above proof with $T$ instead of $T'$ and by using the fact that bounded sequences in $\ran T$ have a weakly convergent subsequence with weak limit in $\ran T$.

\smallskip\noindent
(vii)$\Sra$(i) By the intermediate part of the proof and Theorem \ref{t:mueller}, we have that $k := \alpha(T) = \delta(T)\le m$. Hence, due to the already proved equivalence of (v) and (i), it follows that zero is a pole of the resolvent of $T$ of order $k$. But $m$ in (vii) is minimal so that $k=m$.
\end{proof}

The so-called {\em Wong sequences} $(N_k)$ and $(R_k)$ (see, e.g., \cite{bit,bt,bt2,w}) for the pencil $(E,A)$ are defined by $N_0 = \{0\}$, $R_0 = \calX$, and
$$
N_{k+1} = E^{-1}AN_k\qquad\text{and}\qquad R_{k+1} = A^{-1}ER_k
$$
for $k=0,1,\dots$. It is easily seen that the $N_k$ are ascending and the $R_k$ are descending.

The statement of the next lemma holds in full generality---even for singular pencils $(E,A)$. It was proved as \cite[Lemma 2.3]{bit} for the special case of finite-dimensional regular pencils and only for $s\in\rho(E,A)$.

\begin{lem}\label{l:translation}
For each $s\in\C$ we have
$$
R_{k+1} = (sE-A)^{-1}ER_k
\qquad\text{and}\qquad
N_{k+1} = E^{-1}(sE-A)N_k.
$$
\end{lem}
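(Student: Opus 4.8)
The plan is to prove both identities by induction on $k$, interpreting $(sE-A)^{-1}Y$ throughout as the full preimage $\{x\in\calX : (sE-A)x\in Y\}$; this is the only meaningful reading when $s\notin\rho(E,A)$, and in particular it is what makes the statement sensible for singular pencils. The argument is purely algebraic, so I do not expect the regularity (or boundedness) assumptions to intervene. I would first record the easy monotonicity of the Wong sequences — $R_{k+1}\subseteq R_k$ and $N_k\subseteq N_{k+1}$, each obtained by a one-line induction from the respective defining relation — since it is precisely this monotonicity that makes the extra ``$sE$'' term harmless.

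For the $R$-chain, the base case is immediate: $(sE-A)x\in\ran E\Leftrightarrow Ax=sEx-(sE-A)x\in\ran E$, so $(sE-A)^{-1}ER_0=A^{-1}\ran E=R_1$. For the step, assume $R_k=(tE-A)^{-1}ER_{k-1}$ for every $t\in\C$. The inclusion $R_{k+1}\subseteq(sE-A)^{-1}ER_k$ is direct: if $x\in R_{k+1}$ then $Ax\in ER_k$, and by monotonicity $x\in R_k$, hence $Ex\in ER_k$ and therefore $(sE-A)x=sEx-Ax\in ER_k$. Conversely, if $(sE-A)x\in ER_k$ then $(sE-A)x\in ER_{k-1}$ (again monotonicity), so the inductive hypothesis with $t=s$ gives $x\in R_k$; thus $Ex\in ER_k$ and $Ax=sEx-(sE-A)x\in ER_k$, i.e.\ $x\in R_{k+1}$.

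For the $N$-chain it is cleanest to prove the stronger fact $(sE-A)N_k=AN_k$ for all $k$ and all $s$; applying $E^{-1}(\cdot)$ then yields the claim. Here $(sE-A)N_k\subseteq AN_k$ needs only $EN_k\subseteq AN_k$, which holds because $x\in N_k$ forces $Ex\in AN_{k-1}\subseteq AN_k$; then $sEx,Ax\in AN_k$ give $(sE-A)x\in AN_k$. For the reverse inclusion I would induct: given $x\in N_k$ we have $Ex\in AN_{k-1}$, which by the inductive hypothesis equals $(sE-A)N_{k-1}$, so $Ex=(sE-A)v$ with $v\in N_{k-1}\subseteq N_k$; then $sEx=(sE-A)(sv)$ and hence $Ax=sEx-(sE-A)x=(sE-A)(sv-x)\in(sE-A)N_k$.

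The routine parts are the two monotonicity inductions and the base cases; the only step needing care is the reverse inclusion at each induction step, where one must first upgrade the hypothesis on $(sE-A)x$ (resp.\ the relation $Ex=(sE-A)v$) to the membership $x\in R_k$ (resp.\ $v\in N_{k-1}$) before the ``$sE$'' term can be absorbed — and this is exactly where one uses that the inductive hypothesis is available for \emph{every} complex $s$, not merely for $s$ in the resolvent set. I anticipate no genuine obstacle beyond this bookkeeping: the lemma is, in essence, the observation that replacing $A$ by the shifted map $sE-A$ leaves both $A^{-1}ER_k$ and $E^{-1}AN_k$ unchanged.
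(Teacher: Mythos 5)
Your proof is correct and follows essentially the same route as the paper: interpret $(sE-A)^{-1}Y$ and $E^{-1}Y$ as full preimages, exploit the algebraic identity $Ax = sEx - (sE-A)x$ together with the monotonicity of the Wong chains, and close with an inductive argument. The one organizational difference is that for the $N$-chain you isolate the (equivalent) set identity $(sE-A)N_k = AN_k$ and then apply $E^{-1}$, whereas the paper proves the inclusion $E^{-1}(sE-A)N_k\subset N_{k+1}$ directly and inducts only on the converse; the underlying computation is the same in both.
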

\begin{proof}
Let $s\in\C$. If $x\in R_{k+1}$, then $Ax\in ER_k$ and also $x\in R_k$. Hence, we have $(sE-A)x = sEx - Ax\in ER_k$, which gives $x\in (sE-A)^{-1}ER_k$. For the opposite direction, let $x\in (sE-A)^{-1}ER_k$, i.e., $(sE-A)x\in ER_k\subset ER_\ell$ for all $\ell = 0,\ldots,k$. From $(sE-A)x\in ER_0$ we conclude that $Ax\in ER_0$ and thus $x\in R_1$. Now, an inductive argument leads to $x\in R_k$ and thus $Ax\in ER_k$, which means $x\in R_{k+1}$.

If $x\in E^{-1}(sE-A)N_k$, then $Ex = (sE-A)u$ with some $u\in N_k$. Hence, $E(su-x) = Au\in AN_k$, i.e., $su-x\in N_{k+1}$. But also $u\in N_{k+1}$ and so $x\in N_{k+1}$. The converse inclusion is proved via induction. Clearly, $N_1 = \ker E = E^{-1}(sE-A)N_0$. Now, if $N_{k+1}\subset E^{-1}(sE-A)N_k$ holds for some $k$, let $x\in N_{k+2}$. Then $Ex = Au$ with $u\in N_{k+1}$, thus $Eu = (sE-A)v$ with $v\in N_k\subset N_{k+1}$, and we obtain
$$
(sE-A)(sv-u) = s(sE-A)v - sEu + Au = Au = Ex.
$$
Since $sv-u\in N_{k+1}$, it follows that $x\in E^{-1}(sE-A)N_{k+1}$, as desired.
\end{proof}

The next corollary follows directly from the definitions of the $R_k$ and $N_k$ and the fact that $\ker ST = T^{-1}\ker S$ for all $S,T\in L(\calX)$.

\begin{cor}\label{c:ranker}
For any $\mu\in\rho(E,A)$ and $k\in\N_0$ we have
$$
R_k = \ran T_\mu^k
\qquad\text{and}\qquad
N_k = \ker T_\mu^k.
$$
\end{cor}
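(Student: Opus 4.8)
The plan is a routine induction on $k$. The base case is immediate from the definitions: $R_0 = \calX = \ran T_\mu^0$ and $N_0 = \{0\} = \ker T_\mu^0$. The engine for the inductive step is Lemma \ref{l:translation} specialized to $s = \mu$, which yields $R_{k+1} = (\mu E - A)^{-1}ER_k = T_\mu R_k$ and $N_{k+1} = E^{-1}(\mu E - A)N_k$, together with the two elementary identities $\ran(ST) = S(\ran T)$ and $\ker(ST) = T^{-1}\ker S$, valid for all $S,T\in L(\calX)$ (the latter is the fact flagged in the statement of the corollary).

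For the range assertion I would argue as follows: assuming $R_k = \ran T_\mu^k$, Lemma \ref{l:translation} gives $R_{k+1} = T_\mu R_k = T_\mu\big(\ran T_\mu^k\big) = \ran\big(T_\mu\cdot T_\mu^k\big) = \ran T_\mu^{k+1}$, which closes the induction.

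For the kernel assertion, assume $N_k = \ker T_\mu^k$. Applying $\ker(ST) = T^{-1}\ker S$ with $S = T_\mu^k$ and $T = T_\mu$ gives $\ker T_\mu^{k+1} = T_\mu^{-1}\big(\ker T_\mu^k\big) = T_\mu^{-1}(N_k)$. Unfolding $T_\mu = (\mu E - A)^{-1}E$ and using that $\mu E - A$ is a bijection from $\dom(E,A)$ onto $\calX$, one sees that $T_\mu x \in N_k$ iff $Ex \in (\mu E - A)N_k$, i.e. $T_\mu^{-1}(N_k) = E^{-1}\big((\mu E - A)N_k\big)$, which by Lemma \ref{l:translation} at $s = \mu$ is exactly $N_{k+1}$. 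Hence $\ker T_\mu^{k+1} = N_{k+1}$, completing the induction.

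I do not expect a genuine obstacle here. The one point needing care is that $E$ is \emph{not} injective (indeed $N_1 = \ker E \neq \{0\}$), so the manipulations with preimages must go through the identity $\ker(ST) = T^{-1}\ker S$ rather than any formal cancellation of operators; and one must make sure Lemma \ref{l:translation} is invoked with the shift $s = \mu$, since that is the value linking the Wong recursions to $T_\mu$.
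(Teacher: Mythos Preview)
Your proof is correct and is exactly the argument the paper has in mind: the corollary is stated to follow ``directly from the definitions of the $R_k$ and $N_k$ and the fact that $\ker ST = T^{-1}\ker S$,'' which is precisely the induction you spell out using Lemma~\ref{l:translation} at $s=\mu$. You have simply made explicit what the paper leaves as a one-line remark.
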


Combining Corollary \ref{c:ranker} and Proposition \ref{p:fin_ind} immediately yields the following theorem characterizing the finite-index property of the regular pencil $(E,A)$.

\begin{thm}\label{t:fin_ind}
Let $m\in\N$. Then the following statements are equivalent:
\begin{enumerate}
\item[{\rm (i)}]   The pencil $(E,A)$ has finite index $m$.
\item[{\rm (ii)}]  $\sigma(E,A)$ is bounded and $R_{m+1} = R_m$ \braces{$m$ minimal\,}.
\item[{\rm (iii)}] $\calX = N_m\oplus R_m$ \braces{$m$ minimal\,}.
\item[{\rm (iv)}]  $R_{m+1} = R_m$ and $N_{m+1} = N_m$ \braces{$m$ minimal\,}.
\item[{\rm (v)}]   There exists a sequence $(s_n)\subset\rho(E,A)$ such that $|s_n|\to\infty$ and
\begin{align}\label{e:sequence}
\|(s_nE-A)^{-1}\| = O(|s_n|^{m-1})\,\text{ as }\,n\to\infty\quad\text{\braces{$m$ minimal\,}},
\end{align}
and $R_m$ is closed.
\item[{\rm (vi)}]  There exists a sequence $(s_n)\subset\rho(E,A)$ with $|s_n|\to\infty$ as in \eqref{e:sequence}, and $R_{k+1} = R_k$ for some $k\in\N$.
\end{enumerate}
\end{thm}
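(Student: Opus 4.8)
The whole statement should fall out of Proposition~\ref{p:fin_ind} once one sets up a dictionary between the pencil $(E,A)$ and the single bounded operator $T_\mu := (\mu E-A)^{-1}E$ for an arbitrary fixed $\mu\in\rho(E,A)$ (recall that $E,A\in L(\calX)$ in this section, so $T_\mu\in L(\calX)$ and Proposition~\ref{p:fin_ind} applies to it). First I would record the spectral and subspace translations. By Lemma~\ref{l:easy_lem}, ``$(E,A)$ has finite index $m$'' is the same as ``zero is a pole of the resolvent of $T_\mu$ of order $m$'', i.e.\ item~(i) of Proposition~\ref{p:fin_ind}. By \eqref{e:corr} (equivalently Lemma~\ref{l:dae_sol}), $\sigma(E,A)$ is bounded if and only if zero is an isolated point of $\sigma(T_\mu)$. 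By Corollary~\ref{c:ranker}, $R_k=\ran T_\mu^k$ and $N_k=\ker T_\mu^k$ for all $k$; hence ``$R_{m+1}=R_m$, $m$ minimal'' means $\delta(T_\mu)=m$, ``$N_{m+1}=N_m$, $m$ minimal'' means $\alpha(T_\mu)=m$, ``$R_m$ closed'' means $\ran T_\mu^m$ closed, ``$R_{k+1}=R_k$ for some $k$'' means $\delta(T_\mu)<\infty$, and $\calX=N_m\oplus R_m$ (with $m$ minimal) is verbatim item~(iv) of Proposition~\ref{p:fin_ind}.

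Second, I would translate the resolvent growth. Starting from the factorization \eqref{e:EAT}, $sE-A=(s-\mu)(\mu E-A)(T_\mu-\tau_\mu(s))$ for $s\neq\mu$, inverting on both sides gives, for $s\in\rho(E,A)\setminus\{\mu\}$,
$$
\frac{\|(T_\mu-\tau_\mu(s))^{-1}\|}{|s-\mu|\,\|\mu E-A\|}\ \le\ \|(sE-A)^{-1}\|\ \le\ \frac{\|(\mu E-A)^{-1}\|}{|s-\mu|}\,\|(T_\mu-\tau_\mu(s))^{-1}\|.
$$
Since $\tau_\mu(s)=(\mu-s)^{-1}$, we have $|s-\mu|=|\tau_\mu(s)|^{-1}$ and $|s|\to\infty$ if and only if $\tau_\mu(s)\to 0$, with $|s|$ comparable to $|\tau_\mu(s)|^{-1}$ near $\infty$. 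Setting $\la_n:=\tau_\mu(s_n)$ therefore gives a correspondence, bijective up to constants, between sequences $(s_n)\subset\rho(E,A)$ with $|s_n|\to\infty$ and sequences $(\la_n)\subset\rho(T_\mu)$ with $\la_n\to 0$ (the inverse being $s_n=\mu-\la_n^{-1}$, legitimate by \eqref{e:corr}), under which, for each fixed $k\in\N$, $\|(s_nE-A)^{-1}\|=O(|s_n|^{k-1})$ holds along the sequence if and only if $\|(T_\mu-\la_n)^{-1}\|=O(|\la_n|^{-k})$ does; in particular the minimal admissible $k$ agrees on the two sides. This is exactly the passage between \eqref{e:sequence} and \eqref{e:seq}, together with the identification ``$R_m$ closed'' $\Leftrightarrow$ ``$\ran T_\mu^m$ closed''.

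With this dictionary in hand, items~(i)--(vi) of the theorem correspond, in order, to items~(i), (iii), (iv), (v), (vi), (vii) of Proposition~\ref{p:fin_ind} applied to $T=T_\mu$, so their equivalence is immediate. For item~(iv) of the theorem one uses additionally that, by Theorem~\ref{t:mueller}, $\alpha(T_\mu)$ and $\delta(T_\mu)$ coincide whenever both are finite, so ``$\alpha(T_\mu)\le m$ and $\delta(T_\mu)\le m$ with $m$ minimal'' is the same as ``$\alpha(T_\mu)=\delta(T_\mu)=m$'', matching item~(v) of the proposition.

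I expect the only genuinely non-routine point to be the resolvent-growth translation in the second step --- specifically, matching the powers of $|s|$ against the powers of $|\la|$ (the shift from exponent $m-1$ to exponent $-m$), checking that minimality of the exponent transfers, and verifying that every eligible sequence on either side produces an eligible sequence on the other. Everything else is a direct substitution through Corollary~\ref{c:ranker}, \eqref{e:corr}, and Lemma~\ref{l:easy_lem}.
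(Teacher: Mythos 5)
Your proof is correct and follows the paper's own approach: the paper proves Theorem~\ref{t:fin_ind} simply by noting that it follows from Corollary~\ref{c:ranker} and Proposition~\ref{p:fin_ind}, and your write-up is exactly the dictionary that is being invoked, including the resolvent-growth translation via \eqref{e:EAT} (which the paper has already spelled out in proving Lemma~\ref{l:easy_lem}). You correctly pair thm~(i)--(vi) with prop~(i), (iii), (iv), (v), (vi), (vii) respectively, and your remark that Theorem~\ref{t:mueller} is needed to see that ``$R_{m+1}=R_m$ and $N_{m+1}=N_m$ with $m$ minimal'' reads as $\alpha(T_\mu)=\delta(T_\mu)=m$ is the one small point that deserves to be said out loud.
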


\begin{cor}
The pencil $(E,A)$ has finite index if and only if $R_{k+1} = R_k$ and $N_{j+1} = N_j$ for some $k,j\in\N$.
\end{cor}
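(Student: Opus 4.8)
The plan is to translate the two Wong‑sequence stationarity conditions into ascent/descent conditions for the operator $T_\mu = (\mu E-A)^{-1}E$ (for a fixed $\mu\in\rho(E,A)$) via Corollary \ref{c:ranker}, and then feed these into the machinery already developed in Theorem \ref{t:mueller}, Proposition \ref{p:fin_ind} and Lemma \ref{l:easy_lem}. Throughout one uses the elementary observation, recorded before the definition of ascent and descent, that once $\ker T^{k+1}=\ker T^k$ (resp.\ $\ran T^{k+1}=\ran T^k$) holds, it holds for all larger exponents as well.

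For the easy direction, assume $(E,A)$ has finite index, say finite index $m$. Then the equivalence (i)$\Leftrightarrow$(iv) of Theorem \ref{t:fin_ind} gives $R_{m+1}=R_m$ and $N_{m+1}=N_m$, so the right‑hand side holds with $k=j=m$.

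For the converse, suppose $R_{k+1}=R_k$ and $N_{j+1}=N_j$ for some $k,j\in\N$. Put $\ell=\max\{k,j\}$; by the stabilisation remark above we still have $R_{\ell+1}=R_\ell$ and $N_{\ell+1}=N_\ell$, so there is a minimal $m$ for which both $R_{m+1}=R_m$ and $N_{m+1}=N_m$, and Theorem \ref{t:fin_ind}, (iv)$\Rightarrow$(i), yields that $(E,A)$ has finite index $m$. If one prefers a self‑contained argument not routed through Theorem \ref{t:fin_ind}: by Corollary \ref{c:ranker} the hypotheses read $\ran T_\mu^{k+1}=\ran T_\mu^k$ and $\ker T_\mu^{j+1}=\ker T_\mu^j$, i.e.\ $\delta(T_\mu)<\infty$ and $\alpha(T_\mu)<\infty$; Theorem \ref{t:mueller} then gives $\alpha(T_\mu)=\delta(T_\mu)=:n$ and $\calX=\ker T_\mu^n\oplus\ran T_\mu^n$, which is exactly condition (v) of Proposition \ref{p:fin_ind}, hence zero is a pole of the resolvent of $T_\mu$ of order $n$, and Lemma \ref{l:easy_lem} turns this into: $(E,A)$ has finite index $n$.

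There is no genuine obstacle here; the content lies entirely in the previously established results, and the only subtlety worth flagging is that stationarity of just one Wong sequence is not enough (as the introduction emphasises, $R_{k+1}=R_k$ alone does not even force $\sigma(E,A)$ to be bounded). What makes the proof work is that having \emph{both} $\alpha(T_\mu)$ and $\delta(T_\mu)$ finite is precisely the hypothesis of Müller's theorem, and the resulting direct‑sum decomposition of $\calX$ is precisely the criterion appearing in Proposition \ref{p:fin_ind}. So the corollary is really just the "for some $m$" version of Theorem \ref{t:fin_ind}, obtained by passing to $\max\{k,j\}$.
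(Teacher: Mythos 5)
Your proof is correct and follows the same route the paper intends: the corollary is stated as an immediate consequence of Theorem \ref{t:fin_ind}, and your passage to $\ell=\max\{k,j\}$ followed by (iv)$\Rightarrow$(i) (or equivalently, via Corollary \ref{c:ranker}, Theorem \ref{t:mueller}, Proposition \ref{p:fin_ind}, and Lemma \ref{l:easy_lem}) is exactly what makes that immediacy precise. The observation that stationarity of a single Wong sequence would not suffice is a worthwhile aside and is consistent with condition (ii) of Theorem \ref{t:fin_ind}, which explicitly pairs $R_{m+1}=R_m$ with boundedness of $\sigma(E,A)$.
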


\begin{rem}
In \cite[Problem 1.2]{tw} it was asked as to whether the closedness of all $R_k$ and the index of $(E,A)$ being $m$ imply $\min\{k : R_k = R_{k+1}\} = m$. The implication (v)$\Sra$(iv) in Theorem \ref{t:fin_ind} answers this question in the affirmative.
\end{rem}

The following example shows that the Wong subspaces $R_1,\ldots,R_{m-1}$ do not necessarily have to be closed.

\begin{ex}
Assume that the equivalent conditions in Theorem \ref{t:fin_ind} are satisfied and let $\mu\in\rho(E,A)$. Then $T_\mu$ decomposes as $T_\mu = N\oplus S$ with respect to the decomposition $\calX = N_m\oplus R_m$, where $N$ is a nilpotent operator and $S$ is boundedly invertible. Hence,
$$
R_k = \ran T_\mu^k = \ran N^k\oplus\ran S^k = \ran N^k\oplus R_m
$$
is closed if and only if $\ran N^k$ is closed. The question therefore is whether there exist nilpotent operators with non-closed range. And in fact there are. As an example consider any bounded operator $S$ with non-closed range on a Banach space $\calZ$. Then $N : \calZ^2\to\calZ^2$,
$$
N := \bmat 0S00
$$
satisfies $N^2=0$ and $\ran N = \ran S\times\{0\}$ is not closed.
\end{ex}

Recall that an operator $T\in L(\calX)$ is called {\em upper semi-Fredholm} if $\dim\ker T < \infty$ and $\ran T$ is closed in $\calX$. The operator $T$ is called {\em Fredholm} if it is upper semi-Fredholm and, in addition, $\dim(\calX/\ran T) < \infty$. The {\em Fredhom index} of a Fredholm operator $T\in L(\calX)$ is defined by
$$
\ind(T) = \dim\ker T - \dim(\calX/\ran T).
$$
Next, we present two sufficient conditions for the spaces $R_k$ to be closed.

\begin{lem}\label{l:Rclosed}
The Wong subspaces $R_k$ are closed in the following cases:
\begin{enumerate}
\item[\rm (a)] $E$ is upper semi-Fredholm.
\item[\rm (b)] $\mu E-A$ is compact for some $\mu\in\C$.
\end{enumerate}
\end{lem}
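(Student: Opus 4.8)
The plan is to reduce both parts to the identity $R_k=\ran T_\mu^k$ from Corollary~\ref{c:ranker}, valid for every $\mu\in\rho(E,A)$ and every $k\in\N_0$, where $T_\mu=(\mu E-A)^{-1}E\in L(\calX)$. Since $R_k$ does not depend on the choice of $\mu$, it suffices to exhibit, in each case, a single $\mu\in\rho(E,A)$ for which all powers $T_\mu^k$ have closed range.

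\textbf{Case (a).} Here I would fix an arbitrary $\mu\in\rho(E,A)$. Since $(\mu E-A)^{-1}$ is a bounded bijection of $\calX$, hence a homeomorphism, one has $\ker T_\mu=\ker E$, which is finite-dimensional, and $\ran T_\mu=(\mu E-A)^{-1}(\ran E)$, which is closed because $\ran E$ is closed; thus $T_\mu$ is upper semi-Fredholm. Invoking the standard fact that the class of upper semi-Fredholm operators is closed under composition (see, e.g., \cite{m}), I conclude that $T_\mu^k$ is upper semi-Fredholm for every $k$, and in particular $R_k=\ran T_\mu^k$ is closed.

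\textbf{Case (b).} If $\calX$ is finite-dimensional there is nothing to prove, so I would assume $\dim\calX=\infty$. Let $\mu_0\in\C$ be such that $\mu_0E-A$ is compact. A compact operator on an infinite-dimensional space cannot be invertible, so $\mu_0\notin\rho(E,A)$; fixing any $\nu\in\rho(E,A)$ we thus have $\nu\neq\mu_0$. Rewriting $(\nu-\mu_0)E=(\nu E-A)-(\mu_0E-A)$ and applying $(\nu E-A)^{-1}$ yields
$$
T_\nu=\frac{1}{\nu-\mu_0}\bigl(I-K\bigr),\qquad K:=(\nu E-A)^{-1}(\mu_0E-A),
$$
where $K$ is compact (a bounded operator composed with a compact one). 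A binomial expansion gives $(I-K)^k=I-K_k$ with $K_k$ a finite sum of powers of $K$, hence compact, so $T_\nu^k=(\nu-\mu_0)^{-k}(I-K_k)$ is a nonzero scalar multiple of a Fredholm operator; by the Riesz--Schauder theorem its range is closed, and therefore so is $R_k=\ran T_\nu^k$.

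I do not anticipate a genuine obstacle. The only points needing care are, in (a), the correct citation of the multiplicative stability of the upper semi-Fredholm class (available in \cite{m}, already used for Theorem~\ref{t:mueller}), and, in (b), checking that the binomial remainder $K_k$ is compact and that the scaling factor $(\nu-\mu_0)^{-k}$ is nonzero — both of which are immediate.
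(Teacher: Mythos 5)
Your proposal is correct. It takes a moderately different route from the paper's: you reduce everything to the identity $R_k = \ran T_\mu^k$ from Corollary~\ref{c:ranker} and then argue once about the range of a single power, whereas the paper exploits the inductive structure of the Wong sequence directly. For (a), the paper applies the recursion $R_{k+1} = A^{-1}ER_k$ together with the fact that upper semi-Fredholm operators map closed subspaces to closed subspaces (Kato, Lemma IV.5.29); you instead verify that $T_\mu$ itself is upper semi-Fredholm (via $\ker T_\mu = \ker E$ and $\ran T_\mu = (\mu E - A)^{-1}\ran E$) and invoke the multiplicative stability of the upper semi-Fredholm class. For (b), the paper first replaces $(E,A)$ by $(E,A_0)$ with $A_0 = \mu E - A$ using Lemma~\ref{l:translation}, rewrites the recursion as $R_{k+1} = \bigl(I + (sE-A_0)^{-1}A_0\bigr)R_k$, and appeals to the same ``Fredholm maps closed subspaces to closed subspaces'' principle; you instead factor $T_\nu = (\nu-\mu_0)^{-1}(I-K)$ with $K$ compact and show via the binomial expansion that $T_\nu^k$ is a nonzero scalar multiple of a Riesz--Schauder operator, so its range is closed outright. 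Both arguments rest on the same semi-Fredholm theory, but yours bypasses Lemma~\ref{l:translation} entirely in case (b) and replaces the induction by a one-shot range computation, at the modest cost of computing $T_\mu^k$ and citing the composition stability of the semi-Fredholm class. The two proofs are thus equivalent in depth, with yours slightly more algebraic and the paper's slightly more structural.
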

\begin{proof}
(a) It is well known (see \cite[Lemma IV.5.29]{k}) that an upper semi-Fredholm operator maps closed subspaces to closed subspaces. Since $A$ is continuous, the claim follows right from the definition of the $R_k$.

(b) Let $A_0 := \mu E-A$ be compact. By Lemma \ref{l:translation} the $R_k$ do not change if we pass in their definition from $(E,A)$ to $(E,A_0)$. Hence, the same lemma gives
$$
R_{k+1} = (sE-A_0)^{-1}ER_k = \big(I + (sE-A_0)^{-1}A_0\big)R_k
$$
for $s\in\rho(E,A_0)\setminus\{0\}$. Since the operator $I + (sE-A_0)^{-1}A_0$ is Fredholm (with Fredholm index zero), see \cite[Theorem IV.5.26]{k}, all $R_k$ are closed.
\end{proof}


\begin{cor}
Assume $\dim\ker E < \infty$. Then the pencil $(E,A)$ has finite index at most $m$ if and only if $E$ is Fredholm with Fredholm index zero and $\ker E\cap R_m = \{0\}$.
\end{cor}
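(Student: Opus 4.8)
The plan is to push everything through the operator $T_\mu = (\mu E - A)^{-1}E$ for a fixed $\mu\in\rho(E,A)$, using Corollary \ref{c:ranker} which gives $R_k = \ran T_\mu^k$ and $N_k = \ker T_\mu^k$; in particular $\ker E = N_1$, and since $(\mu E-A)^{-1}$ is boundedly invertible, $E$ is Fredholm (with a given index) if and only if $T_\mu$ is. Two preliminary facts I would record first. (a) If $\dim\ker E<\infty$, then $\dim N_k<\infty$ for all $k$: indeed $T_\mu$ maps $N_{k+1}$ into $N_k$ with kernel $N_1$, so $\dim N_{k+1}\le\dim N_1+\dim N_k$ and hence $\dim N_m\le m\dim\ker E$. (b) By Theorem \ref{t:fin_ind}(iii), ``$(E,A)$ has finite index at most $m$'' is equivalent to ``$\calX = N_m\oplus R_m$'': if the splitting holds, the minimal $m_0$ for which it holds satisfies $m_0\le m$ and realizes the index; conversely a finite index $m'\le m$ makes $N_k$ and $R_k$ stationary from $m'$ on, so $N_m=N_{m'}$ and $R_m=R_{m'}$. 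Thus it suffices to prove $\calX=N_m\oplus R_m\iff$ ($E$ Fredholm, $\ind E=0$, and $N_1\cap R_m=\{0\}$).

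For the forward implication, assume $\calX = N_m\oplus R_m$. Then $N_1\cap R_m\subseteq N_m\cap R_m=\{0\}$ at once. Writing $T_\mu = N\oplus S$ with respect to this decomposition (as in the reasoning around Proposition \ref{p:fin_ind}(v)$\Rightarrow$(i)), $R_m$ is closed, $S=T_\mu|_{R_m}$ is boundedly invertible and $N=T_\mu|_{N_m}$ is nilpotent; by (a), $N_m$ is finite-dimensional, so $\codim R_m=\dim N_m<\infty$. Consequently $\ker T_\mu=\ker N$ is finite-dimensional, $\ran T_\mu=\ran N\oplus R_m$ is closed (finite-dimensional plus closed), and $\codim\ran T_\mu\le\codim R_m<\infty$, so $T_\mu$, and hence $E$, is Fredholm. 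Finally $\ind E=\ind T_\mu=\ind N+\ind S=\ind N=0$, the last equality being rank--nullity on the finite-dimensional $N_m$.

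For the converse, assume $E$ (equivalently $T_\mu$) is Fredholm with $\ind E=0$ and $N_1\cap R_m=\{0\}$. The crux is to upgrade $N_1\cap R_m=\{0\}$ to the full splitting. First, $N_1\cap R_m=\{0\}$ forces $N_{m+1}=N_m$: if $T_\mu^{m+1}x=0$ then $T_\mu^m x\in\ker T_\mu\cap\ran T_\mu^m=N_1\cap R_m=\{0\}$, so $x\in N_m$. Hence $N_{2m}=N_m$, and this gives $N_m\cap R_m=\{0\}$: if $x=T_\mu^m y$ with $T_\mu^m x=0$, then $T_\mu^{2m}y=0$, i.e.\ $y\in N_{2m}=N_m$, whence $x=0$. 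On the other hand $T_\mu^m$ is Fredholm with $\ind T_\mu^m=m\,\ind T_\mu=0$, so $R_m=\ran T_\mu^m$ is closed with $\codim R_m=\dim\ker T_\mu^m=\dim N_m<\infty$ (finite by (a)). A finite-dimensional subspace meeting a subspace of equal finite codimension only in $\{0\}$ is an algebraic complement of it, so $\calX=N_m\oplus R_m$, as required; by (b) this is the claim.

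The main obstacle is the converse step: deriving the Kato-type splitting $\calX=N_m\oplus R_m$ from the much weaker hypothesis $\ker E\cap R_m=\{0\}$. The index-zero assumption is precisely what is needed there, since it equates $\dim N_m$ with $\codim R_m$; without it one only obtains the ascent bound $\alpha(T_\mu)\le m$, not the descent condition. Finiteness of $\dim\ker E$ is what keeps $\dim N_m$ (and thus $\codim R_m$) finite so that $E$ is genuinely Fredholm. Everything else is bookkeeping with Corollary \ref{c:ranker}, Theorem \ref{t:fin_ind}, and the standard facts that products of Fredholm operators are Fredholm and that the index is additive, both available from the references already cited.
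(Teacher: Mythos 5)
Your proof is correct, and the overall strategy---passing to $T_\mu$, using Corollary \ref{c:ranker} and Theorem \ref{t:fin_ind}(iii), and exploiting Fredholm index theory---matches the paper's. The one genuine difference is in the converse direction. The paper, after noting that $\ker E\cap R_m=\{0\}$ yields $\alpha(T_\mu)\le m$ and that $T_\mu$ is Fredholm with index zero, simply cites Kaashoek's theorem (\cite[Thm.\ 4.3]{kaa}, to the effect that an index-zero Fredholm operator with finite ascent has $\delta(T_\mu)=\alpha(T_\mu)$) and then closes via Proposition \ref{p:fin_ind}. You instead upgrade $N_1\cap R_m=\{0\}$ to $N_m\cap R_m=\{0\}$ by elementary manipulation of kernels and ranges, then use the Fredholm index theorem ($\ind T_\mu^m=m\,\ind T_\mu=0$, so $\codim R_m=\dim N_m<\infty$) to conclude directly that the trivial intersection forces $\calX=N_m\oplus R_m$. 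Your route is more self-contained---it replaces the appeal to Kaashoek with a short dimension count---while the paper's is marginally shorter since the external reference does the work. Both are valid; nothing is missing in either, and your preliminary observations (a) and (b) are the correct bookkeeping needed for the argument to be airtight.
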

\begin{proof}
Let $\mu\in\rho(E,A)$. Note that $\dim\ker E < \infty$ implies $\dim\ker T_\mu < \infty$ and hence also $\dim\ker T_\mu^m < \infty$.

Assume that $(E,A)$ has finite index at most $m$. Then, by Theorem \ref{t:fin_ind}, $R_m = \ran T_\mu^m$ is closed, and we have $\calX = \ker T_\mu^m\oplus\ran T_\mu^m$. That is, $T_\mu^m$ is Fredholm with $\ind(T_\mu^m) = 0$. By \cite[Theorem 16.6]{m}, $T_\mu$ is Fredholm and the index theorem \cite[Theorem 16.12]{m} yields $\ind(T_\mu^m) = m\cdot\ind(T_\mu)$ so that $\ind(T_\mu)=0$, and hence $E$ is Fredholm with $\ind(E) = \ind((\mu E - A)^{-1}) + \ind(E) = \ind((\mu E - A)^{-1}E) = \ind(T_\mu) = 0$. Also, $\ker E\cap R_m = N_1\cap R_m\subset N_m\cap R_m = \{0\}$.

Conversely, assume that $E$ is Fredholm with $\ind(E) = 0$ and $\ker E\cap R_m = \{0\}$. The latter is equivalent to $\ker T_\mu\cap\ran T_\mu^m = \{0\}$ and means that $\ker T_\mu^m = \ker T_\mu^{m+1}$, i.e., $\alpha(T_\mu)\le m$. Moreover, it is evident that also $T_\mu$ is Fredholm with $\ind(T_\mu)=0$ so that from \cite[Thm. 4.3]{kaa} we infer that $\delta(T_\mu)=\alpha(T_\mu)\le m$. Now, it follows from Proposition \ref{p:fin_ind} that $(E,A)$ has finite index at most $m$.
\end{proof}

We now apply the results to DAEs
\begin{equation}\label{e:ivp}
\tfrac d{dt}Ex = Ax,\qquad Ex(0) = Ex_0.
\end{equation}

\begin{thm}\label{t:ivp_solved}
Assume that the regular pencil $(E,A)$ has finite index $m$. Then the IVP \eqref{e:ivp} has a solution if and only if $x_0\in R_m\oplus\ker E$, in which case the solution is unique.
\end{thm}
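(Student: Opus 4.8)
The strategy is to reduce everything to the operator $T_\mu = (\mu E - A)^{-1}E$ for a fixed $\mu \in \rho(E,A)$, use the decomposition coming from Theorem~\ref{t:fin_ind}, and solve the two decoupled pieces explicitly. By Lemma~\ref{l:easy_lem} and Theorem~\ref{t:fin_ind}, the finite-index hypothesis means that $\calX = N_m \oplus R_m$ with $N_m = \ker T_\mu^m$ and $R_m = \ran T_\mu^m$ (Corollary~\ref{c:ranker}), and that with respect to this decomposition $T_\mu = N \oplus S$ where $N \in L(N_m)$ is nilpotent with $N^m = 0$ and $S \in L(R_m)$ is boundedly invertible. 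Following the reduction in Subsection~\ref{ss:reduction_op}, the IVP \eqref{e:ivp} is equivalent to \eqref{e:IVP_red}, i.e.\ $\tfrac{d}{dt}T_\mu x = (\mu T_\mu - I)x$, $T_\mu x(0) = T_\mu x_0$, and this splits along $P_0$ (the spectral projection onto $N_m$) into the two IVPs \eqref{e:IVP_qnp} and \eqref{e:IVP_inv}. Here the "quasi-nilpotent part" \eqref{e:IVP_qnp} is governed by the \emph{nilpotent} operator $N$ itself (via $T = (\mu S_0 - I)^{-1}S_0$ built from $S_0 = N$), and the invertible part \eqref{e:IVP_inv} is a genuine ODE.

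First I would treat the invertible part: \eqref{e:IVP_inv} reads $\dot z_1 = (\mu - S_1^{-1})z_1$, $z_1(0) = (I-P_0)x_0$, with $S_1 = S$ boundedly invertible; this has the unique solution $z_1(t) = e^{(\mu - S^{-1})t}(I-P_0)x_0$ for \emph{any} initial value in $R_m$, as already noted in the text. Next I would treat the nilpotent part \eqref{e:IVP_qnp}: $\tfrac{d}{dt}Nz_0 = z_0$ on $N_m$ with $N^m = 0$. The standard finite-dimensional argument carries over verbatim: if $z_0 \in C^1$ solves it, apply $N^{m-1}$ to get $\tfrac{d}{dt}N^m z_0 = N^{m-1}z_0$, i.e.\ $N^{m-1}z_0 = 0$; then inductively applying $N^{m-2}, N^{m-3},\dots$ and differentiating repeatedly forces $N^k z_0 = 0$ for all $k$ down to $k=0$, so $z_0 \equiv 0$. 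Hence the only solution of \eqref{e:IVP_qnp} is the trivial one, and it exists (trivially) if and only if the initial condition $TP_0x_0 = 0$ is consistent — but since $z_0 \equiv 0$ we need $TP_0x_0 = T\cdot 0 = 0$, and one checks $\ker T = \ker(\mu S_0 - I)^{-1}S_0 = \ker S_0 = \ker N$ on $N_m$, so the constraint is $N P_0 x_0 = 0$, equivalently $P_0 x_0 \in \ker N = \ker T_\mu \cap N_m = \ker T_\mu = \ker E$ (using that $\ker T_\mu = \ker E$ and $\ker E \subset N_1 \subset N_m$). Actually it is cleaner to phrase the consistency condition directly in terms of \eqref{e:IVP_red}: a solution $x = z_0 + z_1$ exists iff $z_0 \equiv 0$ solves \eqref{e:IVP_qnp} with the prescribed initial datum, which happens iff $T_\mu P_0 x_0 = 0$.

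Then I would reassemble: a solution of \eqref{e:IVP_red} exists iff $T_\mu P_0 x_0 = 0$, and when it exists it is $x = z_1 = e^{(\mu - S^{-1})t}(I-P_0)x_0$, manifestly unique. It remains to translate the consistency condition $T_\mu P_0 x_0 = 0$ into "$x_0 \in R_m \oplus \ker E$". Since $P_0$ projects onto $N_m$ along $R_m$, we have $x_0 \in R_m \oplus \ker E$ iff $P_0 x_0 \in \ker E$. Now $\ker E = \ker T_\mu$ because $\mu E - A$ is invertible, so $\ker E = \ker T_\mu \subset \ker T_\mu|_{N_m} = \ker N$; and $T_\mu P_0 x_0 = N(P_0 x_0)$, which vanishes iff $P_0 x_0 \in \ker N = \ker T_\mu = \ker E$ (the equality $\ker N = \ker T_\mu$ on $N_m$ holds since $T_\mu = N\oplus S$ with $S$ injective). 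This gives exactly $x_0 \in R_m \oplus \ker E$, completing the equivalence. Finally I would note that uniqueness also follows without re-deriving it: the difference of two solutions solves the homogeneous IVP with $x_0 = 0$, whose only solution is $0$ by the above.

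\textbf{Main obstacle.} The routine parts (the explicit ODE solution, the induction killing the nilpotent part) are standard. The one point requiring genuine care is the bookkeeping between the various equivalent operators — $T_\mu$, $S_0$, $S_1$, and the auxiliary $T$ in \eqref{e:IVP_qnp} — and in particular verifying cleanly that the consistency set $\{x_0 : T_\mu P_0 x_0 = 0\}$ equals $R_m \oplus \ker E$ rather than something slightly larger or smaller (e.g.\ $R_m \oplus \ker T_\mu^j$ for the wrong $j$). Getting $\ker E = \ker T_\mu$ and $\ker T_\mu|_{N_m} = \ker N$ right, and making sure the initial condition really is $Ex(0) = Ex_0$ (so only $Ex_0$, not $x_0$, is constrained, which is why $\ker E$ appears as a summand) is where the subtlety lies.
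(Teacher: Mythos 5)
Your proof is correct and follows essentially the same route as the paper: reduce to $T_\mu$, use the decomposition $\calX = N_m\oplus R_m$ from Theorem~\ref{t:fin_ind} so that \eqref{e:IVP_qnp} lives over a nilpotent operator, kill that part by the standard inductive argument, solve the invertible part as an ODE, and translate the consistency condition $TP_0x_0 = 0$ into $x_0\in R_m\oplus\ker E$ via $\ker T = \ker T_\mu = \ker E$. The only small slip is writing the DAE in \eqref{e:IVP_qnp} as $\tfrac{d}{dt}Nz_0=z_0$ rather than $\tfrac{d}{dt}Tz_0=z_0$ with $T=(\mu S_0-I)^{-1}S_0$; this is harmless since $T$ is nilpotent with the same kernel chain as $N=S_0$, but the paper runs the induction directly with $T$.
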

\begin{proof}
Choose some $\mu\in\rho(E,A)$ and define $P_0$ as in \eqref{e:rdp}. Then $P_0\calX = N_m = \ker T_\mu^m$ and $(I-P_0)\calX = R_m = \ran T_\mu^m$. Hence, $S_0 = T_\mu|_{P_0\calX}$ is nilpotent and hence so is $T = (\mu S_0 - I)^{-1}S_0$. By Lemma \ref{l:dae_sol}, the IVP \eqref{e:ivp} has a solution if and only if \eqref{e:IVP_qnp} has a solution. In fact, the DAE in \eqref{e:IVP_qnp} has only the trivial solution. Namely, if $z_0$ is a solution, then $0 = \frac d{dt}T^m z_0 = T^{m-1}z_0$ and thus $z_0(t)\in\ker T^{m-1}$ for all $t\ge 0$. Further processing this argument leads to $z_0(t)\in\ker T$ for all $t\ge 0$ and hence finally $z_0 = \frac d{dt}Tz_0 = 0$. Consequently, the initial value condition in \eqref{e:IVP_qnp} is feasible if and only if $TP_0x_0 = 0$, which means that $x_0\in R_m\oplus\ker E$ as $\ker T = \ker E$.
\end{proof}

\begin{cor}
If the regular pencil $(E,A)$ has index $m=1$, then \eqref{e:ivp} has a unique solution for all $x_0\in\calX$.
\end{cor}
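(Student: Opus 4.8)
The plan is to deduce this immediately from Theorem~\ref{t:ivp_solved} together with the characterization of the index-one property in Theorem~\ref{t:fin_ind}. Theorem~\ref{t:ivp_solved} already tells us that, whenever $(E,A)$ has finite index $m$, the IVP \eqref{e:ivp} is solvable precisely for $x_0\in R_m\oplus\ker E$ and the solution is then unique. So for $m=1$ it suffices to verify that $R_1\oplus\ker E=\calX$, i.e.\ that every initial value is admissible.

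First I would recall that, by definition of the first Wong subspace, $N_1=E^{-1}AN_0=E^{-1}A\{0\}=\ker E$ (equivalently, by Corollary~\ref{c:ranker} one has $N_1=\ker T_\mu$, and $\ker T_\mu=\ker E$ since $(\mu E-A)^{-1}$ is injective). Next, since $(E,A)$ has finite index $1$, the equivalence (i)$\Llra$(iii) in Theorem~\ref{t:fin_ind} gives the direct sum decomposition $\calX=N_1\oplus R_1$. Combining the two facts yields $\calX=\ker E\oplus R_1=R_1\oplus\ker E$, so the admissibility condition $x_0\in R_m\oplus\ker E$ of Theorem~\ref{t:ivp_solved} (with $m=1$) holds for every $x_0\in\calX$; hence \eqref{e:ivp} has a unique solution for all $x_0$.

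There is essentially no obstacle here: the corollary is a direct bookkeeping consequence of the two cited results, the only point worth a line being the identification $N_1=\ker E$, which is immediate from the definition. One may additionally observe that in the index-one case the nilpotent part $T=(\mu S_0-I)^{-1}S_0$ appearing in \eqref{e:IVP_qnp} satisfies $T=0$, so the DAE component vanishes entirely and only the ODE \eqref{e:IVP_inv} survives, which of course has a unique solution for every initial value — giving an alternative, equally short route to the statement.
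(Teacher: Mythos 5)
Your argument is correct and follows the paper's own proof essentially verbatim: apply Theorem~\ref{t:ivp_solved}, identify $N_1=\ker E$, and use the decomposition $\calX=N_1\oplus R_1$ from Theorem~\ref{t:fin_ind}. The extra observation that the nilpotent part $T$ vanishes in the index-one case is a fine alternative viewpoint but not needed.
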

\begin{proof}
By Theorem \ref{t:ivp_solved}, the IVP \eqref{e:ivp} has a solution for any $x_0\in R_1\oplus\ker E$. But $\ker E = N_1$ and $R_1\oplus N_1 = \calX$.
\end{proof}

\section{DAEs with bounded spectrum and infinite index}\label{s:qnp}
In this section, we assume that the spectrum $\sigma(E,A)$ of the pencil $(E,A)$ is bounded. By Lemma \ref{l:dae_sol}, the IVP \eqref{e:IVP_ub} then has a solution if and only if \eqref{e:IVP_qnp} has a solution. Therefore, we shall now focus on DAEs as in \eqref{e:IVP_qnp} of the form
\begin{equation}\label{e:qn}
\tfrac d{dt}Tx = x,
\end{equation}
where $\sigma(T)=\{0\}$. Such operators are called {\em quasi-nilpotent}. A quasi-nilpo\-tent operator $T$ is nilpotent if and only if zero is a pole of its resolvent (cf.\ Proposition \ref{p:fin_ind}). In this case, the DAE \eqref{e:qn} only has the trivial solution $x=0$ (see the proof of Theorem \ref{t:ivp_solved}). Otherwise, $T$ is not nilpotent and zero is an essential singularity of the resolvent of $T$. In this case, the question arises whether solutions of \eqref{e:qn} are still necessarily trivial. The next proposition shows that this is not the case, in general.

\begin{prop}\label{p:C0s}
Let $A$ be a generator of a $C_0$-semigroup $(T_t)_{t\ge 0}$ of operators $T_t\in L(\calX)$ with $\sigma(A)=\emptyset$. Then $T=A^{-1}$ is quasi-nilpotent and for any $x_0\in\calX$ the IVP $\tfrac d{dt}Tx = x$, $x(0)=x_0$, has the unique solution $T_tx_0$.
\end{prop}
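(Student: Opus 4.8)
The plan is to prove the two claims separately: that $T=A^{-1}$ is quasi-nilpotent, and that $t\mapsto T_tx_0$ is the unique solution of the IVP. For the first claim, note that $\sigma(A)=\emptyset$ gives $0\in\rho(A)$, so $T=A^{-1}\in L(\calX)$ is well defined. I would then invoke the elementary spectral correspondence between a closed operator with $0$ in its resolvent set and its bounded inverse: it yields $\sigma(T)\setminus\{0\}=\{\lambda^{-1}:\lambda\in\sigma(A)\}=\emptyset$. Moreover $0\in\sigma(T)$, since otherwise $A=T^{-1}$ would be bounded and hence, being an operator on a complex Banach space, would have nonempty spectrum. Thus $\sigma(T)=\{0\}$.

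\emph{Existence.} Put $x(t):=T_tx_0$. Strong continuity of $(T_t)$ gives $x\in C(\R_0^+,\calX)$ with $x(0)=x_0$, and since in the DAE $\frac d{dt}Tx=x$ both coefficient operators $T$ and $I$ are bounded, the requirement $x(t)\in\dom(E,A)$ is automatic. The key input is the standard semigroup identity $\int_0^tT_sx_0\,ds\in\dom A$ with $A\int_0^tT_sx_0\,ds=T_tx_0-x_0$; applying $T=A^{-1}$ yields
$$
Tx(t)=Tx_0+\int_0^t x(s)\,ds .
$$
The right-hand side lies in $C^1(\R_0^+,\calX)$ with derivative $x(t)$, so $Tx\in C^1(\R_0^+,\calX)$, $\frac d{dt}(Tx)(t)=x(t)$, and $Tx(0)=Tx_0$; hence $x$ is a solution with $x(0)=x_0$.

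\emph{Uniqueness.} Let $\tilde x$ be any solution with $\tilde x(0)=x_0$ and set $w:=\tilde x-x$, so that $w\in C(\R_0^+,\calX)$, $w(0)=0$, $Tw\in C^1$ and $\frac d{dt}(Tw)(t)=w(t)$. Integrating and using $Tw(0)=0$ gives $u(t):=Tw(t)=\int_0^t w(s)\,ds$. Then $u\in C^1(\R_0^+,\calX)$, $u'=w$, $u(0)=0$, and the relation $u(t)=A^{-1}w(t)$ shows $u(t)\in\dom A$ with $Au(t)=w(t)=u'(t)$. Thus $u$ is a classical solution of the abstract Cauchy problem $u'=Au$, $u(0)=0$, and the usual uniqueness argument applies: the map $s\mapsto T_{t-s}u(s)$ is differentiable on $[0,t]$ with derivative $T_{t-s}\big(u'(s)-Au(s)\big)=0$, hence constant, so $u(t)=T_tu(0)=0$. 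Therefore $u\equiv 0$ and $w=u'\equiv 0$.

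The only steps that need a little care are the identification $\sigma(A^{-1})=\{0\}$, where one must use that the generator $A$ is genuinely unbounded (otherwise it could not have empty spectrum), and, in the uniqueness step, checking that $u$ carries exactly the regularity required to run the ACP uniqueness argument — here the membership $u(t)\in\dom A$ comes for free from $u=A^{-1}w$. I do not expect any substantive obstacle beyond these bookkeeping points.
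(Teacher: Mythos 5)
Your proposal is correct and follows essentially the same route as the paper: reduce the IVP to the abstract Cauchy problem $u'=Au$ via $u=Tw=A^{-1}w$ and invoke uniqueness for the ACP, and produce the solution $T_tx_0$ by pulling differentiability through $A^{-1}$. The only cosmetic differences are that you establish existence via the integrated semigroup identity $A\int_0^tT_sx_0\,ds=T_tx_0-x_0$ rather than directly differentiating $T_tA^{-1}x_0$ as the paper does, and you spell out the standard $s\mapsto T_{t-s}u(s)$ argument and the spectral-mapping step $\sigma(A^{-1})=\{0\}$ that the paper dismisses as ``easy to see.''
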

\begin{proof}
First of all, it is easy to see that $T$ is quasi-nilpotent. Let $x_0\in\calX$ be arbitrary. In fact, the proposed solution $\bar x(t) := T_tx_0$ is evidently continuous as a function from $\R_0^+$ to $\calX$ and, since $y_0 := A^{-1}x_0\in\dom A$ and $T_ty_0$ solves the Cauchy problem $\dot y = Ay$, $y(0) = y_0$, we have
$$
\tfrac d{dt}T\bar x(t) = \tfrac d{dt}A^{-1}T_tx_0 = \tfrac d{dt}T_tA^{-1}x_0 = AT_tA^{-1}x_0 = T_tx_0 = \bar x(t).
$$
Hence, $\bar x$ is a solution of the IVP. If $\wt x$ is another solution, set $z := \bar x - \wt x$ and $w = A^{-1}z$. Then $\frac d{dt}Tz = z$ and $z(0)=0$. In particular, $w = Tz\in C^1(\R_0^+,\calX)$ and $\dot w = z = Aw$, $w(0) = 0$. Consequently , $w=0$ and so $z=Aw=0$, i.e., $\wt x = \bar x$.
\end{proof}

Hence, if $T^{-1}$ is the generator of a $C_0$-semigroup, the DAE $\frac d{dt}Tx = x$ behaves like an infinite-dimensional well-posed ODE---in a sense even better, because it has a unique ``classical'' solution for every initial value $x_0\in\calX$.

\begin{ex}\label{ex:malsomalso}
Consider the Volterra operator $V : L^2(0,1)\to L^2(0,1)$, defined by
$$
(Vu)(s) = \int_0^su(\sigma)\,d\sigma.
$$
It is well known that $V$ is quasi-nilpotent and compact.

\smallskip\noindent
{\bf (a)} The operator $T = -V$ is the inverse of the differential operator $A : L^2(0,1)\supset\dom A\to L^2(0,1)$ with $Af = -f'$ for $f$ in
$$
\dom A = \{u\in W^{1,2}(0,1) : u(0) = 0\}.
$$
It is well known that the operator $A$ is the generator of the $C_0$-semigroup of contractions $(T_t)_{t\ge 0}$, given by
\[
T_tu(s) = \begin{cases}u(s-t) &\text{for $t\le s\le 1$}\\0 &\text{otherwise}\end{cases}.
\]
Hence, the DAE $\frac d{dt}Tx = x$ has non-trivial solutions by Proposition \ref{p:C0s}.

\smallskip
\noindent{\bf (b)} The DAE $\frac d{dt}Tx = x$ with $T=V$ only has the trivial solution on $[0,\infty)$. Indeed, if $x$ is a solution on $[0,\infty)$ and $\tau>1$, then $z(t) = x(\tau-t)$ for $t\in [0,\tau]$, solves $-\frac d{dt}Tz = z$ on $[0,\tau]$ and is therefore given by $z(t) = T_tz(0)$. Hence, $x(t) = T_{\tau-t}x(\tau) = 0$ for $t<\tau-1$. Since $\tau>1$ was arbitrary, the claim follows. We further would like to point out that the function $x(t) = T_{\tau-t}x(\tau)$ solves the IVP $\frac d{dt}Tx = x$, $x(0) = 0$, on $[0,\tau]$, but does not vanish.

\end{ex}

In view of Example \ref{ex:malsomalso}, the question arises for which quasi-nilpotent operators the DAE \eqref{e:qn} only has the trivial solution on $[0,\infty)$. We pose this as an open problem.

\begin{openprob}
For which quasi-nilpotent operators $T\in L(\calX)$ does the DAE $\frac d{dt}Tx = x$ only have the trivial solution $x=0$ on $[0,\infty)$?
\end{openprob}

\begin{rem}
Note that if $x$ is a solution of \eqref{e:qn}, then $T^nx$, $n\in\N$, is a solution. Thus, for any function $f$ analytic on a neighborhood of zero we have that $f(T)x$ is a solution.
\end{rem}

In the following three subsections, we collect some necessary and sufficient conditions for the existence of solutions of the DAE $\frac d{dt}Tx=x$.

\subsection{Essentially bounded solutions on the half-axis}
In this subsection, we study $L^\infty$-solutions of the IVP
\begin{align}\label{e:IVP99}
\tfrac d{dt}Tx = x,\quad Tx(0) = Tx_0,
\end{align}
on $[0,\infty)$. For this, we assume that the Banach space $\calX$ has the Radon-Nikodym property. It is well known that reflexive Banach spaces have this property. W.\ Arendt proved in \cite{a} that $\calX$ has the Radon-Nikodym property if and only if each Lipschitz-continuous function $F : [0,1]\to\calX$ is a.e.\ differentiable.

\begin{thm}\label{t:full}
Let $\calX$ have the Radon-Nikodym property. Then the IVP \eqref{e:IVP99} has a solution $x\in L^\infty([0,\infty),\calX)$ if and only if
\begin{align}\label{e:cond99}
\sup\big\{\big\|[(T-s)^{-1}T]^{n+1}x_0\big\| : s>0,\,n\in\N_0\big\} < \infty.
\end{align}
The solution then is unique.
\end{thm}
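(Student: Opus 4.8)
The plan is to translate everything into Laplace transforms and then invoke the vector-valued Widder representation theorem, which is the step where the Radon--Nikodym property is used. We read an $L^\infty$-solution of \eqref{e:IVP99} as a function $x\in L^\infty([0,\infty),\calX)$ with $Tx(t)=Tx_0+\int_0^t x(\sigma)\,d\sigma$ for all $t\ge 0$ (that is, $Tx$ is taken in its locally Lipschitz version); by the Radon--Nikodym characterization recalled before the theorem, this is equivalent to $Tx$ being a.e.\ differentiable with $\tfrac d{dt}Tx=x$ a.e.\ and $Tx(0)=Tx_0$. I will use two facts about the Laplace transform $\widehat x(s)=\int_0^\infty e^{-st}x(t)\,dt$ of a bounded measurable $\calX$-valued function $x$: first, $\|\widehat x^{(n)}(s)\|\le \|x\|_\infty\,n!/s^{n+1}$ for all $n\in\N_0$ and $s>0$; second, conversely, Arendt's theorem \cite{a} --- and here the Radon--Nikodym property of $\calX$ enters --- states that every $r\in C^\infty((0,\infty),\calX)$ with $M:=\sup_{n\in\N_0,\,s>0}\tfrac{s^{n+1}}{n!}\|r^{(n)}(s)\|<\infty$ is the Laplace transform of a unique $x\in L^\infty([0,\infty),\calX)$, with $\|x\|_\infty\le M$.

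Next I would pin down the only possible transform. Applying the scalar Laplace transform to $Tx(t)=Tx_0+\int_0^t x$ and pulling the bounded operator $T$ through the Bochner integral gives $T\widehat x(s)=\tfrac1s Tx_0+\tfrac1s\widehat x(s)$, that is $(sT-I)\widehat x(s)=Tx_0$; since $\sigma(T)=\{0\}$, the operator $sT-I=s(T-\tfrac1s)$ is invertible for every $s>0$, so necessarily $\widehat x=r$ with $r(s):=(sT-I)^{-1}Tx_0$. Conversely, if some $x\in L^\infty$ satisfies $\widehat x=r$, then $(sT-I)\widehat x(s)=Tx_0$ rearranges into $\widehat{Tx}(s)=\widehat g(s)$ for all $s>0$, where $g(t):=Tx_0+\int_0^t x(\sigma)\,d\sigma$, and injectivity of the real Laplace transform forces $Tx(t)=g(t)$ for a.e.\ $t$; hence $x$ is an $L^\infty$-solution. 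Therefore \eqref{e:IVP99} admits an $L^\infty$-solution if and only if $r$ is the Laplace transform of some bounded function, and by the two facts above this is equivalent to $\sup_{n\in\N_0,\,s>0}\tfrac{s^{n+1}}{n!}\|r^{(n)}(s)\|<\infty$; uniqueness of the solution is immediate from the uniqueness clause in Arendt's theorem.

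It then remains to compute $r^{(n)}$ and to recognize this supremum. Since the resolvents of $T$ commute with $T$ and with one another, $\tfrac d{ds}(sT-I)^{-1}=-(sT-I)^{-1}T(sT-I)^{-1}$ and a short induction give $r^{(n)}(s)=(-1)^n n!\,\big[(sT-I)^{-1}T\big]^{n+1}x_0$. Using $(sT-I)^{-1}T=\tfrac1s(T-\tfrac1s)^{-1}T$ we get $\tfrac{s^{n+1}}{n!}\|r^{(n)}(s)\|=\big\|[(T-\tfrac1s)^{-1}T]^{n+1}x_0\big\|$, and the change of variable $u=1/s$ turns $\sup_{n\in\N_0,\,s>0}$ into the supremum over $u>0$ and $n\in\N_0$, which is exactly the quantity in \eqref{e:cond99}.

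The only genuinely non-elementary ingredient is the vector-valued Widder theorem, so the main obstacle is to arrange the problem so that its hypotheses hold --- this is precisely why $\calX$ is assumed to have the Radon--Nikodym property --- together with the routine but necessary bookkeeping: pulling $T$ in and out of the Laplace integral, injectivity of the real Laplace transform on functions of at most linear growth, and checking that the adopted notion of $L^\infty$-solution is the integral equation used above.
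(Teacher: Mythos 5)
Your proposal follows essentially the same route as the paper: Laplace-transform the DAE to obtain $\widehat x(s)=(sT-I)^{-1}Tx_0$, recognize the Widder-type bound $\sup_{n,s}\frac{s^{n+1}}{n!}\|\widehat x^{(n)}(s)\|<\infty$ as exactly \eqref{e:cond99}, and invoke Arendt's vector-valued Widder theorem (which is where the Radon--Nikodym property enters) for the converse, concluding via injectivity of the Laplace transform. The only differences from the paper's proof are presentational: you compute $r^{(n)}(s)=(-1)^n n!\,[(sT-I)^{-1}T]^{n+1}x_0$ explicitly and use the substitution $u=1/s$, whereas the paper writes the same estimate directly; both are correct.
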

\begin{proof}
In what follows, we make use of the Laplace transform
\[
\calL z(s) = \int_0^\infty e^{-st}z(t)\,dt,\quad s>0,
\]
which is well defined for measurable functions $z : [0,\infty)\to\calX$ with polynomial growth, i.e., $\|z(t)\|\le M(1+t)^n$ for some $n\in\N$ and $M>0$ and all $t\ge 0$, and defines an analytic $\calX$-valued function on $\C^+ = \{\la\in\C : \Re\la>0\}$. It is easy to see that the Laplace transform $Z$ of a function $z\in L^\infty([0,\infty),\calX)$ satisfies
\begin{align}\label{e:absch}
\tfrac 1{n!}\big\|s^{n+1}Z^{(n)}(s)\big\|\,\le\,\|z\|_\infty,\quad n\in\N_0,\,s>0.
\end{align}
Let $x\in L^\infty([0,\infty),\calX)$ be a solution of \eqref{e:IVP99}. Applying the Laplace transform to $\frac d{dt}Tx = x$ gives $-Tx_0 + sT\calL x(s) = \calL x(s)$ and thus $X(s) := \calL x(s) = (sT-I)^{-1}Tx_0$. By \eqref{e:absch} for $n\in\N_0$ and $s>0$ we have
\begin{align*}
\big\|\big[(T-\tfrac 1s)^{-1}T\big]^{n+1}x_0\big\|
&= \big\|s^{n+1}(sT-I)^{-(n+1)}T^{n+1}x_0\big\|\\
&= \tfrac 1{n!}\big\|s^{n+1}X^{(n)}(s)\big\|\,\le\,\|x\|_\infty,
\end{align*}
which implies \eqref{e:cond99}.

Conversely, assume that \eqref{e:cond99} holds and define the function $X(s) = (sT-I)^{-1}Tx_0$, $s>0$. Then \eqref{e:cond99} implies that
\[
\sup\big\{\tfrac 1{n!}\big\|s^{n+1}X^{(n)}(s)\big\| : s>0,\,n\in\N_0\big\} < \infty.
\]
By \cite[Theorem 1.4]{a} this implies that there exists a unique $x\in L^\infty([0,\infty),\calX)$ such that $X = \calL x$. Hence, we have $(sT-I)^{-1}Tx_0 = \calL x(s)$ and thus $Tx_0 = (sT-I)\calL x(s)$, i.e.,
\[
\calL[Tx_0](s) = \tfrac 1s Tx_0 = \calL[Tx](s) - \tfrac 1s\calL x(s).
\]
Define $y(t) := \int_0^t x(\tau)\,d\tau$. Then $y$ has polynomial growth and $\calL y(s) = \tfrac 1s\calL x(s)$, which gives $\calL[Tx-Tx_0 - y]=0$. By the uniqueness theorem for the Laplace transform (see, e.g., \cite[\paragraf 5 Corollary 7.2]{wi}), we obtain $Tx-Tx_0 = y$, which is equivalent to \eqref{e:IVP99}.
\end{proof}

\begin{rem}
{\bf (a)} As is easily proved by induction, we have
\[
[(T-s)^{-1}T]^{n+1} = (-1)^{n+1}\!\!\sum_{k=n+1}^\infty\binom{k-1}n s^{-k}T^k.
\]
Therefore, for all $s>0$,
\begin{align*}
\sum_{n=0}^\infty\big\|[(T-s)^{-1}T]^{n+1}\big\|
&\le\sum_{n=0}^\infty\sum_{k=n+1}^\infty\binom{k-1}n s^{-k}\|T^k\|\\
&= \sum_{k=1}^\infty\bigg(\sum_{n=0}^{k-1}\binom{k-1}n\bigg)s^{-k}\|T^k\|\\
&= \sum_{k=1}^\infty 2^{k-1}s^{-k}\|T^k\|.
\end{align*}
In particular, we conclude that $\|[(T-s)^{-1}T]^{n+1}\|$ is uniformly bounded for all $n\in\N_0$ and $s\ge s_0>0$. Hence, the term ``$s>0$'' in condition \ref{e:cond99} can be equivalently replaced by ``$s\in (0,s_0)$'' for any $s_0>0$.

\smallskip\noindent
{\bf (b)} In the case where $T$ is injective, the IVP \eqref{e:IVP99} is equivalent to $\dot z = Az$, $z(0) = z_0$, where $A = T^{-1}$, $z = Tx$, and $z_0 = Tx_0$. Condition \ref{e:cond99} can then be rephrased as $\sup\,\{\|\la^{n+1}(\la-A)^{-(n+1)}x_0\| : \la>0,\,n\in\N_0\} < \infty$, which resembles the Hille-Yosida condition for generators of bounded semigroups.

\smallskip\noindent
{\bf (c)} As is seen from the proof of Theorem \ref{t:full}, the necessity of \eqref{e:cond99} for the existence of an $L^\infty$-solution of the IVP does not require $\calX$ to have the Radon-Nikodym property.
\end{rem}

\subsection{Square integrable solutions on compact intervals}
Next, we refrain from the requirement that solutions must be defined on the entire positive time axis $[0,\infty)$. Instead, we shall consider the existence of solutions to \eqref{e:qn} on intervals $[0,\tau]$ for $0<\tau < \infty$.

Recall that if $\calX$ is a Hilbert space, then the Fourier series of a function $F\in L^2([0,\tau];\calX)$ converges (unconditionally) to $F$ in $L^2([0,\tau];\calX)$ (see \cite{ab}). In the proof of the next theorem, we make use of the following lemma which easily follows from the scalar case by exploiting an orthonormal basis of $\calX$.

\begin{lem}
Let $\calX$ be a Hilbert space, $F\in L^2([0,\tau];\calX)$ and let $F(t) = \sum_{k\in\Z}c_ke^{2\pi ikt/\tau}$ be its Fourier series. Then $F\in W^{1,2}([0,\tau];\calX)$ with $F(\tau) = F(0)$ if and only if $(kc_k)_{k\in\Z}\in\ell^2(\Z;\calX)$. In this case, we have $\dot F(t) = \frac 1\tau\sum_{k\neq 0}2\pi ik c_k\cdot e^{2\pi ikt/\tau}$.
\end{lem}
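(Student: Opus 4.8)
The plan is to pass between $F$ and its sequence of vector-valued Fourier coefficients $(c_k)_{k\in\Z}$. Since $\tau^{-1/2}e^{2\pi ik\cdot/\tau}$, $k\in\Z$, is an orthonormal basis of $L^2([0,\tau];\C)$, the result cited from \cite{ab} yields that $F\mapsto(c_k)_k$ is a bijection from $L^2([0,\tau];\calX)$ onto $\ell^2(\Z;\calX)$ with $\int_0^\tau\|F(t)\|^2\,dt=\tau\sum_{k\in\Z}\|c_k\|^2$; in particular the Fourier coefficient map is injective. Throughout, all Sobolev statements and the boundary condition $F(\tau)=F(0)$ are read through the continuous representative of $F$, which exists because any Bochner function with an $L^1$ distributional derivative coincides a.e.\ with an absolutely continuous function obeying the fundamental theorem of calculus (proved for an arbitrary Banach space by mollification); no Radon--Nikodym property is needed here.

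For the ``if'' part, assume $(kc_k)_k\in\ell^2(\Z;\calX)$. Cauchy--Schwarz gives $\sum_{k\neq0}\|c_k\|\le\big(\sum_{k\neq0}k^{-2}\big)^{1/2}\big(\sum_k\|kc_k\|^2\big)^{1/2}<\infty$, so $\wt F(t):=\sum_k c_ke^{2\pi ikt/\tau}$ converges absolutely and uniformly; $\wt F$ is continuous, has the same Fourier coefficients as $F$ (hence $\wt F=F$ a.e.), and satisfies $\wt F(0)=\sum_kc_k=\wt F(\tau)$. The candidate derivative $G(t):=\tfrac1\tau\sum_{k\neq0}2\pi ikc_ke^{2\pi ikt/\tau}$ has Fourier coefficients $\tfrac{2\pi ik}\tau c_k\in\ell^2(\Z;\calX)$, so it converges in $L^2([0,\tau];\calX)$. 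Testing against an arbitrary scalar $\phi\in C_c^\infty((0,\tau))$, interchanging summation with integration (justified by uniform, resp.\ $L^2$, convergence) and integrating by parts in the smooth functions $e^{2\pi ikt/\tau}$ gives $\int_0^\tau\wt F\,\phi'=-\int_0^\tau G\,\phi$. Hence $\wt F\in W^{1,2}([0,\tau];\calX)$ with $\dot{\wt F}=G$, which is exactly the asserted formula, and $F(\tau)=F(0)$.

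For the ``only if'' part, let $F\in W^{1,2}([0,\tau];\calX)$ with $F(\tau)=F(0)$ and put $G=\dot F\in L^2$. Its continuous representative being absolutely continuous, the product rule and fundamental theorem of calculus apply to $t\mapsto F(t)e^{-2\pi ikt/\tau}$, so integrating by parts in $d_k:=\tfrac1\tau\int_0^\tau G(t)e^{-2\pi ikt/\tau}\,dt$ yields $d_k=\tfrac1\tau\big(F(\tau)-F(0)\big)+\tfrac{2\pi ik}\tau c_k=\tfrac{2\pi ik}\tau c_k$ for all $k$ (in particular $d_0=0$). By Parseval for $G\in L^2$, $(d_k)_k\in\ell^2(\Z;\calX)$, which is precisely $(kc_k)_k\in\ell^2(\Z;\calX)$; and the $L^2$-expansion of $\dot F$ is $\sum_kd_ke^{2\pi ikt/\tau}=\tfrac1\tau\sum_{k\neq0}2\pi ikc_ke^{2\pi ikt/\tau}$.

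No step here is hard; the only points deserving care are the two soft facts used above — the existence of the absolutely continuous representative of a Bochner $W^{1,2}$-function, so that pointwise periodicity and the boundary terms in the integration by parts make sense, and the interchange of summation and integration, legitimate by the stated $L^2$- or uniform convergence. As a cross-check one may instead follow the orthonormal-basis route suggested above: fixing an orthonormal basis $(e_j)$ of $\calX$ and writing $F_j:=\langle F,e_j\rangle$, one observes that $\langle c_k,e_j\rangle$ is the $k$-th scalar Fourier coefficient of $F_j$, that $F\in W^{1,2}([0,\tau];\calX)$ with $F(\tau)=F(0)$ is equivalent to each $F_j\in W^{1,2}([0,\tau])$ with $F_j(\tau)=F_j(0)$ together with $\sum_j\|\dot F_j\|_{L^2}^2<\infty$, and then applies the classical scalar characterization to each $F_j$ and sums over $j$ via Tonelli.
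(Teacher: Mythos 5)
Your proof is correct, and it takes a genuinely different route from the paper's. The paper dispatches the lemma by fixing an orthonormal basis $(e_j)$ of $\calX$, applying the classical scalar characterization of $W^{1,2}$-periodic functions to each component $F_j=\langle F,e_j\rangle$, and summing via Tonelli -- the route you sketch as a ``cross-check.'' Your main argument works directly at the vector-valued level: you use Parseval in $L^2([0,\tau];\calX)$, Cauchy--Schwarz to get absolute/uniform convergence of the series when $(kc_k)\in\ell^2$, a duality argument against $\phi\in C^\infty_c$ for the ``if'' direction, and integration by parts against the exponentials for the ``only if'' direction. Both are sound; the component-wise route leans entirely on the scalar theorem but requires one to verify carefully that vector-valued $W^{1,2}$ is equivalent to ``each $F_j\in W^{1,2}$ plus $\sum_j\|\dot F_j\|_{L^2}^2<\infty$,'' while your direct route avoids that bookkeeping at the cost of explicitly justifying the interchange of sum and integral and the existence of the absolutely continuous representative. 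Your remark that no Radon--Nikodym property is needed here is correct and worth keeping: one already has $\dot F\in L^1$, so the fundamental theorem of calculus for Bochner integrals furnishes the AC representative directly (RNP is only needed for the converse, Lipschitz $\Rightarrow$ a.e.\ differentiable). One small inaccuracy: attributing the vector-valued Parseval isomorphism $L^2([0,\tau];\calX)\cong\ell^2(\Z;\calX)$ to \cite{ab} is a slight mismatch -- that reference addresses Banach-space-valued Fourier series; for a Hilbert space $\calX$ the identity is classical and elementary. This does not affect the validity of the argument.
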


As we saw in Example \ref{ex:malsomalso} (b), there may be distinct solutions of the DAE $\frac d{dt}Tx = x$ on an interval with the same initial value $Tx(0)=Tx_0$. The following proposition shows that the existence of an $L^2$-solution of $\frac d{dt}Tx = x$ on $[0,\tau]$ can be characterized in terms of the difference $Tx(\tau) - Tx(0)$ and that this difference determines the solution uniquely.

\begin{thm}\label{t:l2}
Let $\calX$ be a Hilbert space and let $y\in\calX$. Then there exists a function $x\in L^2([0,\tau];\calX)$ such that $Tx\in W^{1,2}([0,\tau];\calX)$ and
\begin{align}\label{e:BP}
\tfrac d{dt}Tx = x,\quad Tx(\tau)-Tx(0) = y,
\end{align}
if and only if $(\|(I - \frac{2\pi ik}\tau T)^{-1}y\|)_{k\in\Z}\in\ell^2(\Z)$. In this case, the solution $x$ is unique and is given by
\begin{align}\label{e:sol}
x(t) = \tfrac 1\tau\sum_{k\in\Z}(I - \tfrac{2\pi ik}\tau T)^{-1}y\cdot e^{\frac{2\pi ik}\tau t}.
\end{align}
\end{thm}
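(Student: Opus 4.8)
The plan is to use the Fourier series characterization from the preceding lemma in both directions. Suppose first that $x\in L^2([0,\tau];\calX)$ solves \eqref{e:BP}. Write the Fourier series $x(t) = \sum_{k\in\Z}a_k e^{2\pi ikt/\tau}$ with $(a_k)\in\ell^2(\Z;\calX)$. Since $Tx\in W^{1,2}$ with $\tfrac{d}{dt}Tx = x$, the lemma applied to $F = Tx$ (whose Fourier coefficients are $Ta_k$) tells us that $(kTa_k)\in\ell^2$ and that the $k$-th Fourier coefficient of $\tfrac{d}{dt}Tx$ equals $\tfrac{2\pi ik}{\tau}Ta_k$ for $k\neq 0$ and $0$ for $k=0$. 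Matching these with the coefficients $a_k$ of $x$ gives $a_0 = 0$ and $\tfrac{2\pi ik}{\tau}Ta_k = a_k$ for $k\neq 0$, i.e. $(I - \tfrac{2\pi ik}{\tau}T)a_k = 0$. Because $\tfrac{\tau}{2\pi ik}\notin\{0\} = \sigma(T)$ for $k\neq 0$, the operator $I - \tfrac{2\pi ik}{\tau}T$ is invertible, hence $a_k = 0$ for all $k\neq 0$ as well. This forces $x = 0$, so in particular $y = Tx(\tau)-Tx(0) = 0$, and then trivially $((I-\tfrac{2\pi ik}{\tau}T)^{-1}y)_k = 0\in\ell^2$, and the candidate formula \eqref{e:sol} also yields $x=0$. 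So the "only if" direction and uniqueness are immediate once one realizes the homogeneous problem has only the trivial solution — but that can't be the intended reading, since then $y$ would be forced to be zero. I need to re-examine: the boundary condition is periodic-type, $Tx(\tau) = Tx(0) + y$, so $Tx$ need not be periodic; the coefficient-matching must be redone allowing a linear-in-$t$ correction.

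The correct approach: set $z(t) := Tx(t) - \tfrac{t}{\tau}y$. Then $z\in W^{1,2}([0,\tau];\calX)$ with $z(\tau) = Tx(\tau) - y = Tx(0) = z(0)$, so $z$ is periodic, and $\dot z = x - \tfrac{1}{\tau}y$. Expand $z(t) = \sum_{k\in\Z}c_k e^{2\pi ikt/\tau}$; by the lemma $(kc_k)\in\ell^2$ and $\dot z(t) = \tfrac{1}{\tau}\sum_{k\neq 0}2\pi ik c_k e^{2\pi ikt/\tau}$. Also $Tx = z + \tfrac{t}{\tau}y$, so the Fourier coefficients of $Tx$ are $Tx$'s coefficients; but more usefully, apply $T$ to the Fourier series of $x$. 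Writing $x(t) = \sum_k a_k e^{2\pi ikt/\tau}$, we have $a_k = \dot z$'s $k$-th coefficient $+ \tfrac{1}{\tau}y\cdot[k=0]$, i.e. $a_0 = \tfrac{1}{\tau}y$ and $a_k = \tfrac{2\pi ik}{\tau}c_k$ for $k\neq 0$. On the other hand $Tx = z + \tfrac{t}{\tau}y$; the $k$-th Fourier coefficient of the left side is $Ta_k$, and of the right side — here one must expand $\tfrac{t}{\tau}y$ in Fourier series on $[0,\tau]$, whose coefficients are $\tfrac12 y$ for $k=0$ and $-\tfrac{1}{2\pi ik}y$ for $k\neq 0$. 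Hmm, this introduces the Fourier coefficients of the sawtooth, which should combine cleanly. Actually a cleaner route: from $\tfrac{d}{dt}Tx = x$ directly, the $k$-th Fourier coefficient of $\tfrac{d}{dt}Tx$ on $[0,\tau]$, for a $W^{1,2}$ function that is \emph{not} periodic, picks up boundary terms: $\widehat{(Tx)'}(k) = \tfrac{2\pi ik}{\tau}\widehat{Tx}(k) + \tfrac{1}{\tau}(Tx(\tau)-Tx(0)) = \tfrac{2\pi ik}{\tau}T a_k + \tfrac{1}{\tau}y$. Setting this equal to $a_k$ (the $k$-th coefficient of $x$) gives, for every $k\in\Z$, the equation $(I - \tfrac{2\pi ik}{\tau}T)a_k = \tfrac{1}{\tau}y$. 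This is the key identity.

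So the main step is: the DAE together with the boundary condition is, coefficient-wise, exactly $(I - \tfrac{2\pi ik}{\tau}T)a_k = \tfrac{1}{\tau}y$ for all $k\in\Z$ — which I would justify carefully via integration by parts (valid since $Tx\in W^{1,2}$) on $[0,\tau]$, being careful with the boundary contributions that encode $Tx(\tau)-Tx(0)=y$. Since $\sigma(T) = \{0\}$ and $\tfrac{\tau}{2\pi ik}\neq 0$, each $I - \tfrac{2\pi ik}{\tau}T$ is boundedly invertible, so $a_k = \tfrac{1}{\tau}(I - \tfrac{2\pi ik}{\tau}T)^{-1}y$ is \emph{forced}; this simultaneously proves uniqueness and yields the formula \eqref{e:sol}. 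A solution $x\in L^2$ exists iff $(a_k)\in\ell^2(\Z;\calX)$, i.e. iff $(\|(I-\tfrac{2\pi ik}{\tau}T)^{-1}y\|)_k\in\ell^2(\Z)$ — giving the stated criterion. For the converse, given that this sequence is in $\ell^2$, I would \emph{define} $x$ by \eqref{e:sol}, note $x\in L^2$, set $z(t) := \sum_{k\neq 0}\tfrac{\tau}{2\pi ik}a_k e^{2\pi ikt/\tau} + \tfrac{t}{\tau}y$ (or rather verify $Tx$ has the right form); since $Ta_k = a_k - \tfrac{1}{\tau}y$ for all $k$ and $(k^{-1}(Ta_k))$... one checks $Tx\in W^{1,2}$ with $\tfrac{d}{dt}Tx = x$ and $Tx(\tau)-Tx(0) = y$ by the lemma. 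The main obstacle is the bookkeeping in the integration-by-parts / coefficient-matching step — in particular getting the boundary term $\tfrac1\tau y$ to appear in the right place and confirming that applying the bounded operator $T$ commutes with the Fourier expansion (it does, since $T$ is bounded and the partial sums converge in $L^2$), and then verifying in the converse direction that the constructed $Tx$ genuinely lies in $W^{1,2}$ with the claimed derivative, which is where the lemma is invoked and requires $(k\cdot Ta_k)_k = (k(a_k - \tfrac1\tau y))_k \in \ell^2$ — this holds because $(ka_k)$ need not be $\ell^2$ a priori; one instead works directly with $z$ whose coefficients $c_k = \tfrac{\tau}{2\pi ik}a_k$ satisfy $(kc_k) = (\tfrac{\tau}{2\pi i}a_k)\in\ell^2$, so the lemma applies to $z$ and transfers back to $Tx = z + \tfrac{t}{\tau}y$.
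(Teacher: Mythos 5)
Your proposal is correct and follows essentially the same route as the paper: expand $x$ in a Fourier series on $[0,\tau]$, derive the coefficient relation $(I-\tfrac{2\pi ik}{\tau}T)a_k=\tfrac1\tau y$ (which forces uniqueness and the formula \eqref{e:sol}, with existence in $L^2$ equivalent to $\ell^2$-summability), and in the converse direction apply the preceding lemma to the periodicity-corrected function $z=Tx-\tfrac t\tau y$ to verify $Tx\in W^{1,2}$. The only cosmetic difference is that you obtain the coefficient relation directly by integration by parts (picking up the boundary term $\tfrac1\tau y$), whereas the paper reaches it by expanding the sawtooth $\tfrac t\tau y$ explicitly in Fourier series; do fix the small slip ``$Ta_k=a_k-\tfrac1\tau y$'', which should read $\tfrac{2\pi ik}{\tau}Ta_k=a_k-\tfrac1\tau y$, i.e.\ $Ta_k=\tfrac{\tau}{2\pi ik}(a_k-\tfrac1\tau y)$ for $k\neq0$, exactly what makes $z$'s coefficients $c_k=\tfrac{\tau}{2\pi ik}a_k$ with $(kc_k)\in\ell^2$.
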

\begin{proof}
Assume that $(\|(I - \frac{2\pi ik}\tau T)^{-1}y\|)_{k\in\Z}\in\ell^2(\Z)$ and let $x$ be as in \eqref{e:sol}. Then
\begin{align*}
Tx(t)
&= \tfrac 1\tau Ty + \sum_{k\neq 0}\tfrac 1{2\pi ik}(I - \tfrac{2\pi ik}\tau T)^{-1}\tfrac{2\pi ik}\tau Ty\cdot e^{\frac{2\pi ik}\tau t}\\
&= \tfrac 1\tau Ty - \Big(\sum_{k\neq 0}\tfrac 1{2\pi ik}e^{\frac{2\pi ik}\tau t}\Big)y + \sum_{k\neq 0}\tfrac 1{2\pi ik}(I - \tfrac{2\pi ik}\tau T)^{-1}y\cdot e^{\frac{2\pi ik}\tau t}\\
&= \tfrac 1\tau  Ty + (\tfrac t\tau-\tfrac 12)y + \sum_{k\neq 0}\tfrac 1{2\pi ik}(I - \tfrac{2\pi ik}\tau T)^{-1}y\cdot e^{\frac{2\pi ik}\tau t}.
\end{align*}
Note that the coefficients $c_k = \tfrac 1{2\pi ik}(I - \tfrac{2\pi ik}\tau T)^{-1}y$ in the series above satisfy $(\|kc_k\|)\in\ell^2(\Z\backslash\{0\})$. Hence, the function $F$ represented by the series is in $H^1([0,\tau];\calX)$ and satisfies $F(\tau) = F(0)$ so that $Tx(\tau) - Tx(0) = y$. Moreover, the series can be differentiated term-by-term resulting in $\frac d{dt}F$. Hence,
\begin{align*}
\tfrac d{dt}Tx(t) = \tfrac 1\tau y + \sum_{k\neq 0}\tfrac{1}\tau(I - \tfrac{2\pi ik}\tau T)^{-1}y\cdot e^{\frac{2\pi ik}\tau t} = x(t).
\end{align*}
Conversely, let $x\in L^2([0,\tau];\calX)$ with $Tx\in W^{1,2}([0,\tau];\calX)$ such that \eqref{e:BP} holds. Write $x$ as $x(t) = \sum_{k\in\Z}c_ke^{2\pi ikt/\tau}$ with $c_k\in\ell^2(\Z;\calX)$ and define $z(t) := Tx(t) + (\frac 12 - \frac t\tau)y$. Then $z\in W^{1,2}([0,\tau];\calX)$ and
\[
\dot z(t) = \tfrac d{dt}Tx(t) - \tfrac 1\tau y = x(t) - \tfrac 1\tau y = c_0 - \tfrac 1\tau y + \sum_{k\neq 0}c_ke^{\frac{2\pi ik}{\tau}t}.
\]
On the other hand,
\begin{align*}
z(t)
&= \sum_{k\in\Z}Tc_ke^{\frac{2\pi ikt}\tau} + \tfrac 12 y - \Big(\tfrac 12 - \sum_{k\neq 0}\tfrac 1{2\pi ik}e^{\frac{2\pi ik}\tau t}\Big)y\\
&= Tc_0 + \sum_{k\neq 0}(Tc_k + \tfrac 1{2\pi ik}y)e^{\frac{2\pi ik}\tau t}
\end{align*}
and
\[
z(\tau)-z(0) = \big(Tx(\tau) - \tfrac 12y\big) - \big(Tx(0) + \tfrac 12 y\big) = y - y = 0.
\]
Hence,
\[
\dot z(t) = \sum_{k\neq 0}\tfrac{2\pi ik}\tau (Tc_k + \tfrac 1{2\pi ik}y)e^{\frac{2\pi ik}{\tau}t}.
\]
From the identity theorem for Fourier series we conclude that
\[
c_0 = \tfrac 1\tau y\qquad\text{and}\qquad\tau c_k = 2\pi ikTc_k + y,\quad k\in\Z\backslash\{0\}
\]
and thus $c_k = \tfrac 1\tau (I - \tfrac{2\pi ik}\tau T)^{-1}y$ for all $k\in\Z$, so that $(\|(I - \tfrac{2\pi ik}\tau T)^{-1}y\|)_{k\in\Z}\in\ell^2(\Z)$, and $x$ coincides with the function given in \eqref{e:sol}.
\end{proof}

The following two corollaries are immediate consequences of Theorem \ref{t:l2}.

\begin{cor}
Let $\calX$ be a Hilbert space. If $x$ is an $L^2$-solution of the DAE $\frac d{dt}Tx=x$ on $[0,\tau]$ such that $Tx(\tau)=Tx(0)$ \braces{or, equivalently, $\int_0^\tau x(s)\,ds=0$}, then $x=0$.
\end{cor}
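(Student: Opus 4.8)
The plan is to deduce this directly from Theorem~\ref{t:l2} by choosing the target vector to be $y := Tx(\tau) - Tx(0)$. First I would observe that an $L^2$-solution $x$ of $\frac{d}{dt}Tx = x$ on $[0,\tau]$, in the sense that $x \in L^2([0,\tau];\calX)$ with $Tx \in W^{1,2}([0,\tau];\calX)$ and $\frac{d}{dt}Tx = x$, is by construction a solution of \eqref{e:BP} for precisely this $y$. Hence the uniqueness clause of Theorem~\ref{t:l2} applies, and $x$ must be the function given by \eqref{e:sol} with that $y$. Under the hypothesis $Tx(\tau) = Tx(0)$ we have $y = 0$, so every summand of \eqref{e:sol} vanishes and therefore $x = 0$.

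Next I would justify the parenthetical reformulation. Since $Tx \in W^{1,2}([0,\tau];\calX)$, the fundamental theorem of calculus for vector-valued Sobolev functions gives $Tx(\tau) - Tx(0) = \int_0^\tau \tfrac{d}{dt}(Tx)(s)\,ds$, and the DAE $\tfrac{d}{dt}(Tx) = x$ turns the right-hand side into $\int_0^\tau x(s)\,ds$. Thus $Tx(\tau) = Tx(0)$ holds if and only if $\int_0^\tau x(s)\,ds = 0$, which is the equivalence asserted in brackets.

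I do not anticipate a real obstacle here: all the substance is already packaged in Theorem~\ref{t:l2}, in particular its uniqueness assertion. The only point requiring a moment's care is to confirm that an $L^2$-solution as defined in the statement genuinely lies in the scope of \eqref{e:BP}---i.e.\ that choosing $y = Tx(\tau) - Tx(0)$ places $x$ among the solutions covered by the theorem---which is immediate from the definitions. As an alternative one could rerun the Fourier-coefficient computation from the proof of Theorem~\ref{t:l2} with $y=0$ to see $c_k = 0$ for all $k$, but invoking the theorem is the cleaner route.
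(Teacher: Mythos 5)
Your argument is correct and is exactly what the paper has in mind: the corollary is stated as an immediate consequence of Theorem~\ref{t:l2}, and specializing to $y = Tx(\tau)-Tx(0)=0$ forces $x=0$ by the uniqueness clause (or directly from \eqref{e:sol}). The justification of the parenthetical equivalence via the fundamental theorem of calculus for $W^{1,2}$ functions is also the right observation.
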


\begin{cor}
Let $\calX$ be a Hilbert space. Then there exists an $L^2$-solution of the IVP $\frac d{dt}Tx = x$, $Tx(0) = Tx_0$, on $[0,\tau]$ if and only if there is some $y\in\ran T$ such that $(\|(I-\frac{2\pi ik}\tau T)^{-1}y\|)_{k\in\Z}\in\ell^2(\Z)$ and
\[
Tx_0 = \tfrac 1\tau  Ty - \tfrac 12 y + \sum_{k\neq 0}\tfrac 1{2\pi ik}(I - \tfrac{2\pi ik}\tau T)^{-1}y.
\]
\end{cor}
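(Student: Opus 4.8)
The plan is to read this off from Theorem~\ref{t:l2} by matching the initial condition $Tx(0)=Tx_0$ of the IVP against the boundary datum $y=Tx(\tau)-Tx(0)$ occurring in \eqref{e:BP}. The crucial ingredient is not the statement of Theorem~\ref{t:l2} but a line from its proof: if $x$ is the function in \eqref{e:sol}, then
\[
Tx(t) = \tfrac 1\tau Ty + \big(\tfrac t\tau - \tfrac 12\big)y + \sum_{k\neq 0}\tfrac 1{2\pi ik}\big(I - \tfrac{2\pi ik}\tau T\big)^{-1}y\cdot e^{\frac{2\pi ik}\tau t},
\]
and evaluating this at $t=0$ gives exactly $Tx(0) = \tfrac 1\tau Ty - \tfrac 12 y + \sum_{k\neq 0}\tfrac 1{2\pi ik}(I - \tfrac{2\pi ik}\tau T)^{-1}y$, i.e.\ the right-hand side of the asserted identity. (Under the hypothesis $(\|(I - \tfrac{2\pi ik}\tau T)^{-1}y\|)_{k\in\Z}\in\ell^2$ this last series even converges absolutely, by Cauchy--Schwarz against $(1/k)_{k\neq 0}\in\ell^2$.)

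For the necessity direction I would take an $L^2$-solution $x$ of $\tfrac d{dt}Tx=x$, $Tx(0)=Tx_0$ on $[0,\tau]$ and set $y:=Tx(\tau)-Tx(0)$. Since the values of $Tx$ lie in $\ran T$ and $Tx(0)=Tx_0\in\ran T$, we have $y\in\ran T$. The function $x$ then satisfies \eqref{e:BP} with this $y$, so Theorem~\ref{t:l2} applies: it gives $(\|(I - \tfrac{2\pi ik}\tau T)^{-1}y\|)_{k\in\Z}\in\ell^2$ and identifies $x$ with the function in \eqref{e:sol}. Evaluating the displayed formula at $t=0$ and using $Tx(0)=Tx_0$ produces the required identity for $Tx_0$.

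For the sufficiency direction, given $y\in\ran T$ with $(\|(I - \tfrac{2\pi ik}\tau T)^{-1}y\|)_{k\in\Z}\in\ell^2$ and with $Tx_0$ equal to the displayed expression, Theorem~\ref{t:l2} produces the function $x$ of \eqref{e:sol}: it lies in $L^2([0,\tau];\calX)$, has $Tx\in W^{1,2}([0,\tau];\calX)$, and satisfies $\tfrac d{dt}Tx=x$ together with $Tx(\tau)-Tx(0)=y$. Evaluating the displayed formula at $t=0$, the hypothesis on $Tx_0$ becomes precisely the statement $Tx(0)=Tx_0$, so $x$ is an $L^2$-solution of the IVP. (In view of Example~\ref{ex:malsomalso}(b) one does not expect, and the statement does not claim, that such a solution is unique: distinct admissible $y$ may give distinct $x$.)

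I do not anticipate a genuine obstacle here; the whole content is the translation through Theorem~\ref{t:l2}. The two points to be careful about are that the needed identity is extracted from the proof of Theorem~\ref{t:l2} rather than its statement---so I would either quote that intermediate formula for $Tx(t)$ or redo the short Fourier-coefficient computation that leads to it---and, in the necessity direction, that one should justify $y=Tx(\tau)-Tx(0)\in\ran T$ from the fact that $Tx$ takes its values in $\ran T$ together with $Tx(0)=Tx_0\in\ran T$.
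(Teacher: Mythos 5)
Your proof matches the paper's intended route: the paper declares this corollary an ``immediate consequence'' of Theorem~\ref{t:l2}, and the derivation you give --- extract the intermediate display for $Tx(t)$ from the proof of Theorem~\ref{t:l2}, note that it has a continuous representative (the polynomial part plus a series with $\ell^1$ coefficients), evaluate at $t=0$, and translate the boundary datum $y=Tx(\tau)-Tx(0)$ into the initial datum $Tx(0)=Tx_0$ --- is exactly the translation the paper has in mind. The one step I would tighten is the claim that $y=Tx(\tau)-Tx(0)\in\ran T$: your justification ``the values of $Tx$ lie in $\ran T$ and $Tx(0)=Tx_0\in\ran T$'' does not quite close the argument, because $x$ is only an $L^2$ function and the endpoint value $Tx(\tau)$ is the value of the continuous $W^{1,2}$-representative, which a priori lies only in $\ol{\ran T}$ (and $\ran T$ need not be closed). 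The paper asserts this membership in the Remark immediately following the corollary without a proof, so this is a point you share with the source rather than a gap peculiar to your argument; it would still be worth either supplying a separate argument for $y\in\ran T$ or observing that the $\ran T$ hypothesis is not actually used in the sufficiency direction.
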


\begin{rem}
Note that the boundary condition in \eqref{e:BP} implies $y\in\ran T$. If $T$ is injective and we set $A := T^{-1}$, then $(I - \frac{2\pi ik}\tau T)^{-1}y = (A-\frac{2\pi ik}\tau)^{-1}x$, where $Tx = y$. In the case $T = -V$, where $V$ is the Volterra operator from Example \ref{ex:malsomalso}, it can be shown that $x\mapsto ((A-\frac{2\pi ik}\tau)^{-1}x)_{k\in\Z}$ is a bounded operator from $\calX = L^2(0,1)$ to $\ell^2(\Z,\calX)$. Therefore, the $\ell^2$-condition in Theorem \ref{t:l2} is satisfied for any $y\in\ran T$.
\end{rem}

\subsection{Analytic solutions}
Finally, we consider solutions $x$ which are (real-)analytic in a neighborhood of $t=0$. Note that in this case we can replace $\frac d{dt}Tx$ by $T\dot x$.

\begin{prop}\label{p:analytic}
Let $T\in L(\calX)$ be an injective quasi-nilpotent operator. Then the IVP
\begin{align}\label{e:IVP80}
T\dot x = x,\quad x(0)=x_0,
\end{align}
has an analytic solution on some interval $[0,r)$, $r>0$, if and only if
\begin{align}\label{e:cond77}
x_0\in\bigcap_{n=1}^\infty\ran T^n\qquad\text{and}\quad\limsup_{k\to\infty}\tfrac 1k\|T^{-k}x_0\|^{1/k} < \infty.
\end{align}
In this case, the solution of the IVP is unique.

Let $x$ be an analytic solution on $[0,r)$. If $x_0=0$, then $x=0$. If $x_0\neq 0$, then $r\le a/e$, where $a:=\liminf_{n\to\infty}n\|T^n\|^{1/n}$. In particular, if $a=0$, then the only analytic solution of $T\dot x = x$ is the zero-function.
\end{prop}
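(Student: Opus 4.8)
The plan is to work with power series. Write an analytic solution as $x(t) = \sum_{k=0}^\infty a_k t^k$ with $a_k\in\calX$. Since $T$ is injective, the equation $T\dot x = x$ is equivalent, upon matching coefficients, to the recursion $T\,(k+1)a_{k+1} = a_k$, i.e.\ $a_{k+1} = \tfrac 1{k+1}T^{-1}a_k$ (which forces $a_k\in\ran T$ for each $k$, and more). Iterating from $a_0 = x_0$ gives $a_k = \tfrac 1{k!}T^{-k}x_0$, so the formal solution is $x(t) = \sum_{k=0}^\infty \tfrac 1{k!}T^{-k}x_0\, t^k$. For this to make sense one needs $x_0\in\ran T^k$ for all $k$, i.e.\ $x_0\in\bigcap_{n\ge 1}\ran T^n$, and for the series to have positive radius of convergence one needs $\limsup_k \|\tfrac1{k!}T^{-k}x_0\|^{1/k} < \infty$; using $\log k! \sim k\log k$ and Stirling, this is exactly the condition $\limsup_k \tfrac 1k\|T^{-k}x_0\|^{1/k} < \infty$ in \eqref{e:cond77}. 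Conversely, if \eqref{e:cond77} holds, the series converges on some $[0,r)$ and termwise differentiation (justified by the radius of convergence) shows it solves \eqref{e:IVP80}. Uniqueness follows because any analytic solution must have coefficients satisfying the same recursion, hence must equal this series; and if $x_0 = 0$ then all $a_k = 0$, so $x\equiv 0$.

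For the radius estimate, suppose $x_0\neq 0$ and $x(t) = \sum_k a_k t^k$ is an analytic solution on $[0,r)$ with $a_k = \tfrac 1{k!}T^{-k}x_0$ as above. The key observation is a lower bound on $\|a_k\|$: since $x_0 = T^k a_k$, we have $\|x_0\| = \|T^k a_k\| \le \|T^k\|\,\|a_k\|$, whence $\|a_k\| \ge \|x_0\|/\|T^k\|$. If the series has radius of convergence $\ge r$, then $\limsup_k \|a_k\|^{1/k} \le 1/r$ (with the convention that $r$ could be $\infty$ if the true radius is infinite, but in any case $\limsup_k\|a_k\|^{1/k}\le 1/r$ when the series converges on $[0,r)$, interpreting $1/\infty=0$). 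Combining, $\|x_0\|^{1/k}/\|T^k\|^{1/k} \le \|a_k\|^{1/k}$, so taking $\limsup$,
$$
\frac 1r \ge \limsup_{k\to\infty}\|a_k\|^{1/k} \ge \limsup_{k\to\infty}\frac{\|x_0\|^{1/k}}{\|T^k\|^{1/k}} = \limsup_{k\to\infty}\frac 1{\|T^k\|^{1/k}} = \frac 1{\liminf_{k\to\infty}\|T^k\|^{1/k}}.
$$
Since $T$ is quasi-nilpotent, $\|T^k\|^{1/k}\to 0$, so the right-hand side is $+\infty$ — which would force $r=0$, a contradiction, unless we sharpen the bookkeeping. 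The fix is to keep the factorial: write $\|a_k\| = \tfrac 1{k!}\|T^{-k}x_0\|$ and instead bound $\|x_0\| = \tfrac 1{k!}\cdot k!\,\|T^k a_k\|$; more cleanly, from $\|x_0\|\le\|T^k\|\cdot k!\,\|a_k\|$ one gets $\|a_k\|^{1/k}\ge \big(\|x_0\|/(k!\|T^k\|)\big)^{1/k}$, and by Stirling $(k!)^{1/k}\sim k/e$, so
$$
\frac 1r \ge \limsup_{k\to\infty}\|a_k\|^{1/k} \ge \limsup_{k\to\infty}\frac{e}{k\,\|T^k\|^{1/k}} = \frac e{\liminf_{k\to\infty}k\|T^k\|^{1/k}} = \frac ea.
$$
Hence $r\le a/e$. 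If $a=0$ this gives $r\le 0$, so no nontrivial analytic solution exists, i.e.\ the only analytic solution of $T\dot x = x$ is the zero function.

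The main obstacle I anticipate is purely bookkeeping: getting the factorials to cancel correctly between the two directions, i.e.\ making sure that the condition $\limsup_k\tfrac 1k\|T^{-k}x_0\|^{1/k}<\infty$ (rather than $\limsup_k\|T^{-k}x_0\|^{1/k}<\infty$) is what matches convergence of $\sum \tfrac 1{k!}T^{-k}x_0 t^k$, and symmetrically that the sharp radius bound involves $a=\liminf_n n\|T^n\|^{1/n}$ (with the factor $e$) rather than just $\liminf_n\|T^n\|^{1/n}$. Both amount to a careful application of Stirling's formula $\log k! = k\log k - k + O(\log k)$, so that $\tfrac 1k\log(k!)^{1/k}$-type terms contribute the $\log k - 1$ that converts a bare root-test limit into the ``$\tfrac 1k(\cdot)^{1/k}$'' normalization. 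No deeper functional-analytic input is needed beyond injectivity of $T$ (for the recursion to be solvable and determined) and quasi-nilpotency (only used implicitly, in that $\ran T^n$ decreasing and $a$ finite or zero are the operative facts; quasi-nilpotency guarantees $a<\infty$ is the interesting regime and $a=0$ occurs, e.g., for $T=V$).
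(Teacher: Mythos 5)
Your proposal is correct and follows essentially the same route as the paper: derive the recursion $a_k=\tfrac1{k!}T^{-k}x_0$ from coefficient matching, translate positive radius of convergence into \eqref{e:cond77} via Stirling, and obtain the radius bound from $\|x_0\|\le k!\,\|T^k\|\,\|a_k\|$. The false start you flag (writing $x_0=T^k a_k$ without the factorial) is real, but your correction recovers exactly the paper's inequality and the ensuing $\limsup$ manipulation (the paper instead passes to a subsequence realizing the $\liminf$, which is equivalent); only your parenthetical aside that quasi-nilpotency forces $a<\infty$ is off — $n\|T^n\|^{1/n}$ can still diverge — but this plays no role in the argument.
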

\begin{proof}
Let $x$ be an analytic solution of $T\dot x = x$ and assume that $x(t) = \sum_{k=0}^\infty x_kt^k$ has radius of convergence $r>0$, where $x_k\in\calX$, $k\in\N$. Then $x_k = \frac{x^{(k)}(0)}{k!}$, and since $Tx^{(k+1)} = x^{(k)}$, we have
$$
Tx_{k+1} = \frac{Tx^{(k+1)}(0)}{(k+1)!} = \frac{x^{(k)}(0)}{(k+1)!} = \frac{x_k}{k+1}.
$$
Hence, $x_0 = k!\cdot T^kx_{k}$, i.e.\ $x_0\in\bigcap_{n=1}^\infty\ran T^n$ and $x_k = (1/k!)T^{-k}x_0$ for all $k\in\N$. This also shows that $x$ is unique as an analytic solution and, in particular, that $x=0$ if $x_0=0$. Moreover,
\[
\tfrac 1r\ge \limsup_{k\to\infty}\tfrac 1{(k!)^{1/k}}\|T^{-k}x_0\|^{1/k}.
\]
By Stirlings formula, we have $k!\sim\sqrt{2\pi k}\left(\frac ke\right)^k$, and thus $(k!)^{1/k}\sim\frac ke$, and \eqref{e:cond77} follows. If $x_0\neq 0$, we may estimate $\|x_0\|\le k!\|T^k\|\|x_{k}\|$.  This implies
$$
\|x_0\|^{1/k}\,\le\,\frac{(k!)^{1/k}}{k/e}\cdot \frac 1e\cdot k\|T^k\|^{1/k}\cdot\|x_k\|^{1/k}.
$$
Choose a subsequence $k_n$ such that $k_n\|T^{k_n}\|^{1/k_n}\to a$. Then the left-hand side converges to $1$, while the limit superior of the right-hand side does not exceed $\frac a{er}$, hence $r\le \tfrac ae$.

Conversely, if \eqref{e:cond77} holds, let $x(t) = \sum_{k=0}^\infty x_kt^k$ with $x_k = (1/k!)T^{-k}x_0$, $k\in\N$. Then \eqref{e:cond77} implies that $x$ is a well defined analytic function on some interval $[0,r)$, $r>0$, and $kTx_k = x_{k-1}$ implies $T\dot x = x$.
\end{proof}

\begin{rem}
{\bf (a)} If the IVP with $x_0\neq 0$ even has a solution which is an entire function, then the Proposition implies that $n\|T^n\|^{1/n}\to\infty$.

\smallskip\noindent
{\bf (b)} The Volterra operator $V$ from Example \ref{ex:malsomalso} satisfies $n!\|V^n\|\to \frac 12$ (see \cite{kershaw}). Therefore, $a=e$ and thus any power series solution of \eqref{e:IVP80} with $x_0\neq 0$ has radius of convergence $r\le 1$.
%
\end{rem}

\noindent{\bf Acknowledgment.} The author would like to thank the reviewers for their careful reading and very helpful comments which led to a significant improvement of the paper. He was funded by the Carl Zeiss Foundation within the project {\em DeepTurb--Deep Learning in and from Turbulence} and by the Free State of Thuringia and the BMBF within the project {\em THInKI--Thüringer Hochschulinitiative für KI im Studium}.

\appendix
\section*{Appendix}
\begin{proof}[Proof of Proposition \rmref{p:proj}]
Let $C_1$ and $C_2$ be two sets of Jordan curves as above surrounding $\sigma$ such that $C_1$ is completely contained in the interior of $C_2$. Then we have (see \eqref{e:TT})
\begin{align*}
P_\sigma^2
&= \frac 1{(2\pi i)^2}\int_{C_1}\int_{C_2}T_zT_s\,dz\,ds = \frac 1{(2\pi i)^2}\int_{C_1}\int_{C_2}\frac{T_s-T_z}{z-s}\,dz\,ds\\
&= \frac 1{(2\pi i)^2}\int_{C_1}T_s\int_{C_2}\frac{1}{z-s}\,dz\,ds + \frac 1{(2\pi i)^2}\int_{C_2}T_z\int_{C_1}\frac{1}{s-z}\,ds\,dz.
\end{align*}
The interior integrals equal $2\pi i$ and $0$, respectively, which implies $P_\sigma^2 = P_\sigma$. Hence, $P_\sigma$ is indeed a projection.

Since $\tfrac{d\tau_\mu}{dz}(z) = (z-\mu)^{-2} = \tau_\mu^2(z)$ and $\mu$ is in the exterior of $C$, we have
$$
0 = \frac 1{2\pi i}\int_C\frac 1{z-\mu}\,dz = -\frac 1{2\pi i}\int_{\tau_\mu(C)}\frac 1\la\,d\la,
$$
hence zero is in the exterior of $\tau_\mu(C)$. Also if $w\in\tau_\mu(\sigma)$, $w = \tau_\mu(s)$ with $s\in\sigma$, then
$$
2\pi i = \int_C\frac 1{z-s}\,dz = \int_{\tau_\mu(C)}\frac{1}{\la((\mu-s)\la-1)}\,d\la = \int_{\tau_\mu(C)}\left(\frac{1}{\la-w} - \frac 1\la\right)\,d\la.
$$
Since zero is in the exterior of $\tau_\mu(C)$, this shows that $w$ (and thus $\sigma$) is in the interior of $\tau_\mu(C)$.

Since $\tau_\mu : \C\setminus\{\mu\}\to\C\setminus\{0\}$ is a homeomorphism, it follows that $\tau_\mu(\sigma)$ is a spectral set for $T_\mu$, cf.\ \eqref{e:corr}. Now,
\begin{align*}
P_\sigma
&= -\frac 1{2\pi i}\int_C\tau_\mu(z)\big(T_\mu - \tau_\mu(z)\big)^{-1}T_\mu\,dz\\
&= -\frac 1{2\pi i}\int_C\tau_\mu(z)\big[I + \tau_\mu(z)\big(T_\mu - \tau_\mu(z)\big)^{-1}\big]\,dz\\
&= -\frac 1{2\pi i}\int_C\tau_\mu(z)^2\big(T_\mu - \tau_\mu(z)\big)^{-1}\,dz = 
-\frac 1{2\pi i}\int_{\tau_\mu(C)}\big(T_\mu - \la\big)^{-1}\,d\la.
\end{align*}
This proves the equation \eqref{e:Psigma} and also that $\tau_\mu(C)$ is positively oriented since otherwise $-P_\sigma$ would be a projection.
\end{proof}

\bigskip
\section*{Author affiliation}
\vspace*{-.4cm}

\begin{thebibliography}{tt}
\bibitem{a}
W. Arendt,
Vector-valued Laplace transforms and Cauchy problems,
Israel J. Math. 59 (1987), 327--352.

\bibitem{ab}
W. Arendt and S. Bu,
Fourier series in Banach spaces and maximal regularity,
Oper.\ Theory: Adv.\ Appl.\ 201 (2009), 21--39.

\bibitem{bit}
T. Berger, A. Ilchmann, and S. Trenn,
The quasi-Weierstraß form for regular matrix pencils,
Linear Algebra Appl., 436 (2012), 4052--4069.

\bibitem{bt}
T. Berger and S. Trenn,
The Quasi-Kronecker form for matrix pencils,
SIAM J. Matrix Anal. Appl. 33 (2012), 336--368.

\bibitem{bt2}
T. Berger and S. Trenn,
Addition to ``The Quasi-Kronecker form for matrix pencils'',
SIAM J. Matrix Anal. Appl. 34 (2013), 94--101.

\bibitem{ds}
N. Dunford and J.T. Schwartz,
Linear operators, Part I: General theory,
John Wiley \& Sons, Inc., Hoboken, New Jersey, 1988.

\bibitem{fmpsw}
T. Faulwasser, B. Maschke, F. Philipp, M. Schaller, and K. Worthmann,
Optimal control of port-Hamiltonian descriptor systems with minimal energy supply,
SIAM J. Control Optim. 60 (2022), 2132--2158.

\bibitem{g}
S. Grabiner,
Ranges of quasi-nilpotent operators,
Illinois J. Math. 15 (1971), 150--152.

\bibitem{kaa}
M.A.\ Kaashoek,
Ascent, descent, nullity and defect, a note on a paper by A. E. Taylor,
Math.\ Ann.\ 172 (1967), 105--115.

\bibitem{k}
T. Kato,
Perturbation theory for linear operators,
Springer-Verlag Berlin Heidelberg, 1995.

\bibitem{kershaw}
D.\ Kershaw,
Operator norms of powers of the Volterra operator,
J.\ Int.\ Equ.\ Appl.\ 11 (1999), 351--362.

\bibitem{km}
P. Kunkel and V. Mehrmann,
Differential-algebraic equations. Analysis and numerical solution. 
Z\"urich: European Mathematical Society Publishing House, 2006.

\bibitem{mga}
B. Messirdi, A. Gherbi, and M. Amouch,
A spectral analysis of linear operator pencils on Banach spaces with application to quotient of bounded operators,
Int. J. Anal. Appl. 7 (2015), 104--128.

\bibitem{m}
V. M\"uller,
Spectral theory of linear operators and spectral systems in Banach algebras,
2nd ed., Birkh\"auser, Basel, Boston, Berlin, 2007.

\bibitem{r}
W. Rudin,
Functional Analysis,
2nd ed., McGraw-Hill, Inc., New York, 1991.

\bibitem{tw}
S. Trostorff and M. Waurick,
On higher index differential-algebraic equations in infinite dimensions, in: The Diversity and Beauty of Applied Operator Theory, Eds. A.\ Böttcher, D.\ Potts, P.\ Stollmann, D.\ Wenzel, Springer International Publishing, 2018.

\bibitem{wi}
D.V. Widder,
An Introduction to Transform Theory,
Academic Press, New York, 1971.

\bibitem{w}
K. Wong,
The eigenvalue problem $\lambda Tx + Sx$,
J. Differential Equations 16 (1974), 270--280.
\end{thebibliography}
\end{document}